\newcommand{\D}{\partial}
\newcommand{\R}{\mathbb{R}}
\newcommand{\Z}{\mathbb{Z}}
\newcommand{\vtl}{\vartriangleleft}
\newcommand{\Set}[1]{\{\,#1\,\}}
\newtheorem{theorem}{Theorem}[section]
\newtheorem{corollary}[theorem]{Corollary}
\newtheorem{lemma}[theorem]{Lemma}
\newtheorem{proposition}[theorem]{Proposition}
\newtheorem*{remark}{Remark}
\newtheorem*{example}{Example}
\theoremstyle{definition}
\newtheorem{definition}[theorem]{Definition}
\DeclareMathOperator{\rank}{rank}
\title{The Persistent Homology of Dual Digital Image Constructions}
\author{Bea Bleile$^{1}$, Adélie Garin$^{2}$, Teresa Heiss$^{3}$, Kelly Maggs$^{2}$, Vanessa Robins$^{4}$}
\date{%
    $^{1}$ School of Science and Technology, University of New England, Armidale, Australia\\
    $^{2}$ Laboratory for Topology and Neuroscience, École polytechnique fédérale de Lausanne (EPFL), Lausanne, Switzerland.\\
    $^{3}$ Institute of Science and Technology (IST) Austria, Kloster\-neu\-burg, Austria\\
    $^{4}$ Research School of Physics, Australian National University, Canberra, Australia.\\
}
\begin{document}
\maketitle
\begin{abstract}
To compute the persistent homology of a grayscale digital image one needs to build a simplicial or cubical complex from it. 
For cubical complexes, the two commonly used constructions (corresponding to direct and indirect digital adjacencies) can give different results for the same image.
The two constructions are almost dual to each other, and we use this relationship to extend and modify the cubical complexes to become dual filtered cell complexes. 
We derive a general relationship between the persistent homology of two dual filtered cell complexes, and also establish how various modifications to a filtered complex change the persistence diagram. 
Applying these results to images, we derive a method to transform the persistence diagram computed using one type of cubical complex into a persistence diagram for the other construction. 
This means software for computing persistent homology from images can now be easily adapted to produce results for either of the two cubical complex constructions without additional low-level code implementation. 
\end{abstract}

\section{Introduction}\label{sec:Introduction}
Persistent homology \cite{ 
Edelsbrunner2002, zomorodian2005computing} allows us to compute topological features of a space via a nested sequence of subspaces or \textbf{filtration} by returning a \textbf{persistence diagram} which reflects the connected components, tunnels, and voids appearing and disappearing as we sweep through the filtration. It has a wide range of applications in diverse contexts including digital images, used for example to study porous materials \cite{Robins_DMT_images}, hurricanes \cite{hurricanes} or in medical applications \cite{medical_applications}. 
While it is common to apply persistent homology to simplicial complexes arising from point clouds, digital images are made up of pixels (in dimension $d=2$) or voxels (for $d \geq 2$) rendering cubical complexes the natural choice as they reflect the regular grid of numbers used to encode the image.

There are two ways to construct a cubical complex from an image $\mathcal{I}$: The \textbf{V-construction} $V(\mathcal{I})$ represents voxels by vertices and the \textbf{T-construction} $T(\mathcal{I})$ represents voxels by top-dimensional cubes. These  constructions are closely related to two different voxel connectivities of classical digital topology. The V-construction corresponds to what is known in computer science as direct connectivity, where voxels are connected if and only if their grid locations differ by 1, so that each voxel has $2d$ neighbours. For $d=2$, pixels are 4-connected and the direct neighbours are to the left and right as well as above and below. The T-construction corresponds to indirect connectivity, where voxels are also connected diagonally, every voxel has $3^d-1$ neighbours and pixels are 8-connected.

It is well known that the choice of direct or indirect adjacency has an impact on the overall topological structure of a binary image and the critical points of a grayscale image function. The effects are particularly significant when the image has structure at a similar length-scale to the digital grid. It will be no surprise then that the persistent homology can also be dramatically different when computed using the V- and T-constructions for the same image, see Figure~\ref{fig:V_and_T_constructions} for an example.
A further issue in classical digital topology is that a single choice of adjacency cannot be applied to both the foreground and background of a binary image (or the sub-level and super-level sets of a grayscale image) in a topologically consistent way. If the sub-level set is given the direct adjacency, the super-level set must take the indirect adjacency (and vice versa) or the Jordan curve theorem will fail to hold, for example. 
This suggests the existence of a duality-like relationship between the V- and T-constructions applied respectively to the sub-level and super-level sets of a grayscale image.

The resulting cubical complexes are \emph{almost} dual. Vertices in the V-construction correspond to top-dimensional cells in the T-construction and interior vertices of the T-construction correspond to top-cells in the V-construction. 
This paper establishes the precise nature of the duality-like relationship between the cubical complexes of the T- and V-constructions, filtrations of these induced by an image and its negative, and their persistence diagrams. We use these results to define simple algorithms that return the persistence diagram for $V(\mathcal{I})$ 
from software that computes diagrams based on the T-construction and vice versa.  
Our results are based on a combinatorial notion of dual cell complexes and dual filtrations. 
We explain how the V- and T-constructions can be modified to obtain dual filtrations of the $d$-sphere. The relationship between the persistence diagrams of a grayscale digital image obtained via the V- and T-constructions then follows from an investigation of the effects of the required modifications on the persistence diagrams and the relationship between the persistent homology of dual filtrations on dual cell complexes. 

Our duality results make it possible to use the advantages of different software packages even when the cubical complex type is not the preferred one for the application at hand. For example, by using a streaming approach, the persistence software cubicle \cite{cubicle} is able to handle particularly large images that do not even need to fit into memory. However, it has only been implemented for the T-construction. Thanks to Section \ref{sec: T_to_V}, cubicle can now be used to compute the  V-construction persistence of an image that does not fit into memory and therefore cannot be processed by existing V-construction persistence software.

\subsection{Related Work}

At its most basic level, the algebraic relationship between the persistent homology of two dual filtered cell complexes is similar to that between persistent homology and persistent relative cohomology. 
The latter corresponds to taking the anti-transpose of the boundary matrix~ \cite{Vin} or equivalently, leaving the boundary matrix as is and applying the row reduction algorithm instead of the column-reduction algorithm~\cite{Vin,oelsboeck}. 
Lemma~\ref{minor_diagonal_lemma} shows that the same relationship applies to the boundary matrices of dual filtered cell complexes. Therefore, Theorem~\ref{main_theorem} can be viewed as a translation of the known bijection between the persistence pairs of persistent homology and relative persistent cohomology into the setting of dual filtrations. 
This theorem is the first step towards establishing the mapping between persistence diagrams of the T- and the V-construction of images in Section~\ref{image_results}. Furthermore, it applies more generally to the persistent homology of dual filtered cell complexes without using the connection to persistent relative cohomology.

The symmetry of extended persistence diagrams~\cite{extending_pers} is also closely related to our Theorem~\ref{main_theorem}.
The mathematical setting for extended persistence is a filtered simplicial complex whose underlying space, $X$, is a manifold without boundary.  It extends the homology sequence derived from a filtration of $X$ by the sub-level sets of a function $X_s = (f^{-1}(\infty,s])$, to continue with the relative homology of the pair $(X,f^{-1}[r,\infty))$, where $s$ is an increasing threshold and $r$ is a decreasing one. 
As observed in~\cite{adaptative}, the cubical complex constructions used in digital image analysis do not provide a suitable topological structure for extended persistence because of the inconsistencies that arise if both the sub- and super-level sets are treated as closed.
The authors of~\cite{adaptative} overcome this by constructing a simplicial complex from the digital image that consistently reflects the connectivity of both sub- and super-level sets, and use this to obtain the expected symmetries in the extended persistence diagram. 
In contrast, we work with the existing widely-implemented cubical complex constructions of digital images and establish results that permit a simple high-level algorithm to transform 
between two regular (not extended) persistence diagrams. 

This paper is a follow-up of an extended abstract \cite{socg} published in the Young Researcher Forum of SoCG $2020$. A version with appendices included is available on arXiv. 
These results have already been cited as the basis for an algorithm implemented by the developers of the software cubical ripser\cite{cubical_ripser}.


\subsection{Overview}

Our results are aimed at both pure and applied mathematicians who want to understand and use the relationship between the persistent homology of dual filtered cell complexes and particularly the two standard constructions of cubical complexes from digital images.

In Section~\ref{sec:background} we define dual cell complexes and dual filtered complexes as used throughout this paper, and provide a brief outline of the definitions and results of persistent homology. The reader who is not familiar with the theory of persistent homology should refer to~\cite{Edel08,zomorodian2005computing} for a more comprehensive introduction. Section~\ref{sec:main_duality_result} establishes the relationship between persistence diagrams of two dual filtered cell complexes. 

In Section \ref{sec:grayscale_digital_image_constructions}, we describe and formalise the two standard cubical complexes used in topological computations on digital images. 
We explain how these two complexes (the T- and V-constructions described earlier) must be extended and modified  to form dual filtered cell 
complexes with underlying space homeomorphic 
to the $d$-sphere. 
The effects these modifications have on persistence diagrams are derived in Section~\ref{sec:technical}. 
For the investigation of one of these effects we use the long exact sequence of a filtered pair of cell complexes arising from the category theoretic view of persistence modules.

The last Section \ref{image_results} states the results for persistence diagrams of digital images and explains how to compute the persistence diagram of the T-construction by simple manipulation of a persistence diagram computed using the V-construction, and vice versa. This gives a practical method for adapting the output from existing software packages that use one or the other construction to obtain the persistence diagram for the dual construction.

\section{Mathematical Background}\label{sec:background}

\subsection{Dual Cell Complexes and Filtrations}

CW-complexes \cite{lundell2012topology} generalize simplicial complexes to allow cells that are not necessarily simplices but homeomorphic to open discs or balls, for example cubes instead of tetrahedra. A CW-complex is \textbf{regular} if the closure of each $k$-cell is homeomorphic to the closed $k$-dimensional ball $D^k$. For the remainder of this paper a \textbf{cell complex} is a finite regular  CW-complex. The \textbf{dimension} $\dim(X)$ of the cell complex $X$ is the maximum dimension of cells in $X$ and,  
writing $X$ for the topological space as well as for the set of its cells, we obtain $\dim(X) = \max\{\dim(\sigma)\, | \, \sigma \in X\}$.

Let $X$ be a cell complex with cells $\tau$ and $\sigma$. If $\tau \subseteq \overline{\sigma}$ then $\tau$ is a \textbf{face} of $\sigma$, and $\sigma$ is a \textbf{coface} of $\tau$, written as $\tau \preceq \sigma$. The \textbf{codimension} of a pair of cells $\tau \preceq \sigma$ is the difference in dimension, $\dim(\sigma) - \dim(\tau)$. If $\sigma$ has a face $\tau$ of codimension 1, we call $\tau$ a \textbf{facet} of $\sigma$, and write $\tau \vtl \sigma$. A function $f : X \to \R$ on the cells of $X$ is \textbf{monotonic} if $f(\sigma) \leq f(\tau)$ whenever $\sigma \preceq \tau$.

\begin{definition}\label{combinatorial_dual}
    The $d$-dimensional cell complexes $X$ and $X^*$ are \textbf{combinatorially dual} if there is a bijection $X \rightarrow X^*, \sigma \mapsto \sigma^*$ between the sets of cells such that
    \begin{enumerate}
    \item (Dimension Reversal) $\dim(\sigma^*) = d - \dim\sigma$ for all $\sigma \in X$.
    \item (Face Reversal) $\sigma \preceq \tau \iff \tau^* \preceq \sigma^*$ for all $\sigma, \tau \in X$.
    \end{enumerate}
\end{definition}

\begin{definition} \label{def:filtered_complex}
A \textbf{filtered (cell) complex} $(X,f)$ is a cell complex $X$ together with a monotonic function $f:X \to \R$. A linear ordering $\sigma_0, \sigma_1, \dots, \sigma_n$ of the cells in $X$, such that $\sigma_i \preceq \sigma_j$ implies $i \leq j$, is \textbf{compatible} with the function $f$ when $$ f(\sigma_0) \leq f(\sigma_1) \leq \ldots \leq f(\sigma_n).$$
\end{definition}

Note that the monotonicity condition implies that, for $r \in \mathbb R$, the sub-level set  
$$ X_r :=  f^{-1}(-\infty,r]$$
is a subcomplex of $X$. The value $f(\sigma)$ determines when a cell enters the filtration given by this nested sequence of subcomplexes. The definition of a  compatible ordering also implies that each step in the sequence
$$\emptyset \subset \Set{\sigma_0} \subset \Set{\sigma_0, \sigma_1} \subset \dots \subset \Set{\sigma_0, \sigma_1, \dots, \sigma_n}=X$$
is a subcomplex, and every sub-level set $f^{-1}(-\infty,r]$ appears somewhere in this sequence: $ f^{-1}(-\infty,r] = f^{-1}(-\infty,f(\sigma_i)] = \Set{\sigma_0, \sigma_1, \dots, \sigma_i}$ for $i=\max \Set{ i=0,\dots,n \ | \ f(\sigma_i)\leq r}$.

\begin{definition}
    Two filtered complexes $(X,f)$ and $(X^*,g)$ are \textbf{dual filtered complexes} if $X$ and $X^*$ are combinatorially dual to one another and if there exists a linear ordering $\sigma_0, \sigma_1, \ldots, \sigma_n$ of the cells in $X$ that is compatible with $f$ and its dual ordering $\sigma^*_n, \sigma^*_{n-1}, \ldots, \sigma^*_0$ is compatible with $g$.
\end{definition}

\begin{proposition} \label{negative_function_lemma}
    Suppose two functions $f : X \to \R$ and $f^* : X^* \to \R$ satisfy $f^*(\sigma^*) = -f(\sigma)$. Then $(X,f)$ and $(X^*,f^*)$ are dual filtered complexes.
\end{proposition}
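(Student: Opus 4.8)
The plan is to verify the two requirements in the definition of dual filtered complexes: that $X$ and $X^*$ are combinatorially dual (which is built into the notation $\sigma \mapsto \sigma^*$ via Definition~\ref{combinatorial_dual}), and that there exists an $f$-compatible linear ordering of the cells of $X$ whose dual ordering is compatible with $f^*$. Before constructing any ordering, I would first confirm that $f^*$ is monotonic, so that $(X^*,f^*)$ is genuinely a filtered complex in the sense of Definition~\ref{def:filtered_complex}. Given $\alpha \preceq \beta$ in $X^*$, write $\alpha = \sigma^*$ and $\beta = \tau^*$; Face Reversal turns this into $\tau \preceq \sigma$ in $X$, monotonicity of $f$ gives $f(\tau) \le f(\sigma)$, and negating yields $f^*(\sigma^*) = -f(\sigma) \le -f(\tau) = f^*(\tau^*)$, i.e.\ $f^*(\alpha) \le f^*(\beta)$. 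Thus the negation relation transports monotonicity of $f$ to monotonicity of $f^*$.

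Next, since $f$ is monotonic, an $f$-compatible ordering $\sigma_0, \sigma_1, \ldots, \sigma_n$ exists: sort the cells by $f$-value and break ties within each level set by any linear extension of the face partial order, so that monotonicity guarantees $\sigma_i \preceq \sigma_j \Rightarrow i \le j$ holds globally. The heart of the argument is then to show that the reversed, dualized sequence $\sigma_n^*, \sigma_{n-1}^*, \ldots, \sigma_0^*$ is compatible with $f^*$. Writing $\tau_j := \sigma_{n-j}^*$, I would check the two conditions of compatibility. For the face condition, $\tau_i \preceq \tau_j$ reads $\sigma_{n-i}^* \preceq \sigma_{n-j}^*$, which by Face Reversal is $\sigma_{n-j} \preceq \sigma_{n-i}$; since the original ordering respects $\preceq$, this forces $n-j \le n-i$, i.e.\ $i \le j$. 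For the value condition, $f^*(\tau_j) = -f(\sigma_{n-j})$, and as $j$ increases the index $n-j$ decreases, so $f(\sigma_{n-j})$ is non-increasing and hence $-f(\sigma_{n-j})$ is non-decreasing, giving $f^*(\tau_0) \le f^*(\tau_1) \le \cdots \le f^*(\tau_n)$.

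The main obstacle---really the only subtle point---is the bookkeeping of two interacting reversals: Face Reversal flips the direction of the relation $\preceq$, while passing from index $i$ to the reversed index $n-i$ flips the direction of the linear order, and these two flips must cancel so that the dual ordering still respects faces. I expect the value condition to be routine once the single negation is tracked, and the face condition to fall out cleanly from this double reversal. The one place where care is needed is justifying the existence of the initial $f$-compatible ordering, but this is the standard topological-sort argument for a monotonic function on a finite poset, so no genuine difficulty arises there.
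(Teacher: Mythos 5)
Your proposal is correct and takes essentially the same route as the paper's proof: fix an $f$-compatible ordering $\sigma_0,\ldots,\sigma_n$ and verify that the reversed dual ordering $\sigma_n^*,\ldots,\sigma_0^*$ is compatible with $f^*$, with Face Reversal cancelling the index reversal and the negation handling the value condition. Your additional explicit checks (monotonicity of $f^*$ and existence of the initial compatible ordering) are points the paper leaves implicit, but they do not change the argument.
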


\subsection{Persistent Homology}

When working with data, standard topological quantities can be highly sensitive to noise and small geometric fluctuations. Persistent homology addresses this problem by examining a collection of spaces, indexed by a real variable often representing an increasing length scale.
These spaces are modelled by a cell complex $X$ with a filter function $f:X\to \R$ assigning to each cell the scale at which this cell appears.

\subsubsection{Definition}
Given a filtered complex $(X,f)$, we obtain inclusions $f^{-1}(-\infty,r] \to f^{-1}(-\infty,s]$ of sub-level sets for $r \leq s$.
Applying degree-$k$ homology with coefficients in $\Z/2\Z$ to these inclusions yields linear maps between vector spaces
$$H_k(f^{-1}(-\infty,r]) \to H_k(f^{-1}(-\infty,s]).$$
The resulting functor $H_k(f):(\R, \leq) \to \mathsf{Vec}_{\Z/2\Z}$ from the poset category $(\R, \leq)$ to the category of vector spaces over the field $\Z/2\Z$ is called a \textbf{persistence module}, for details see \cite{chazal2016structure}.

As discussed in \cite{chazal2016structure}, Gabriel's Theorem from representation theory implies that the persistence module $H_k(f)$ decomposes into a sum of persistence modules consisting of $\mathbb{Z}/2\mathbb{Z}$ for $r\in[b,d)$ connected by identity maps, and $0$ elsewhere, called \textbf{interval modules} $\mathbb{I}_{[b,d)}$:
$$H_k(f) \cong \bigoplus_{l\in L} \mathbb{I}_{[b_l, d_l)}.$$
Each interval summand $\mathbb{I}_{[b_l,d_l)}$ represents a degree-$k$ homological feature 
that is \textbf{born} at $r=b_l$ and \textbf{dies} at $r=d_l$. If the final space $X$ has non-trivial homology there are features that never die. These have $d_l=\infty$ and the interval is called \textbf{essential}.

The \textbf{degree-$k$ persistence diagram} of $f$ is the multiset $$\mathsf{Dgm}^k(f) = \Set{ [b_l,d_l) \mid 
l \in L}.$$
We write $[b_l, d_l)_k \in \mathsf{Dgm}^k(f)$ to denote the homological degree of an interval and define the \textbf{persistence diagram} of $f$ as the disjoint union over all degrees: $$\mathsf{Dgm}(f) = \bigsqcup_{k=0}^{\textrm{dim}(X)} \mathsf{Dgm}^k(f).$$
Writing $\mathsf{Dgm}_\mathbf{F}(f)$ for the multiset of finite intervals with $d_l < \infty$, and $\mathsf{Dgm}_\infty(f)$ for the remaining essential ones, we obtain $\mathsf{Dgm}(f) = \mathsf{Dgm}_\mathbf{F}(f) \sqcup \mathsf{Dgm}_\infty(f)$. 

\subsubsection{Computation}
\label{sec:computations}
To compute the persistence diagram $\mathsf{Dgm}(f)$ we choose an ordering $\sigma_0, \sigma_1, \dots, \sigma_n$ of the cells in $X$ that is compatible with $f$. Cells $\sigma_i$ and $\sigma_j$ appear at the same step in the nested sequence of sub-level sets $\left(f^{-1}(\infty,r]\right)_{r \in \R}$ if $f(\sigma_i)=f(\sigma_j)$. 
For the following computations however, we must add exactly one cell at every step:
$$\emptyset \subset \Set{\sigma_0} \subset \Set{\sigma_0, \sigma_1} \subset \dots \subset \Set{\sigma_0, \sigma_1, \dots, \sigma_{n-1}} \subset \Set{\sigma_0, \sigma_1, \dots, \sigma_n}=X.$$
When adding the cells one step at a time, a cell of dimension $k$ causes either the birth of a $k$-dimensional feature or the death of a $(k-1)$-homology class~\cite{delfinado1995incremental}, that is, each cell is either a \textbf{birth} or a \textbf{death cell}. 
A pair $(\sigma_i, \sigma_j)$ of cells where $\sigma_j$ kills the homological feature created by $\sigma_i$ is called a \textbf{persistence pair}. 
A persistence pair $(\sigma_i, \sigma_j)$ corresponds to the interval $[f(\sigma_i),f(\sigma_j)) \in \mathsf{Dgm}_\mathbf{F}(f)$. Note that this interval can be empty, namely if $f(\sigma_i)=f(\sigma_j)$. Empty intervals are usually neglected in the persistence diagram. A birth cell $\sigma_i$ with no corresponding death cell is called \textbf{essential}, and corresponds to the interval $[f(\sigma_i),\infty) \in \mathsf{Dgm}_\infty(f)$.

Recall that presentations for the standard homology groups are found by studying the image and kernel of integer-entry matrices that represent the boundary maps taking oriented chains of dimension $k$ to those of dimension $(k-1)$~\cite{munkres}. 
In persistent homology, we work with the $\Z/2\Z$ \textbf{total boundary matrix} $D$, which is defined by $D_{i,j} = 1$ if $\sigma_i \vtl \sigma_j$ and $0$ otherwise.
Define $$r_D(i,j) = \rank D_i^j - \rank D_i^{j-1} - \rank D_{i+1}^j + \rank D_{i+1}^{j-1}$$
where $D_i^j =D[i:n, 0:j]$ is the lower-left sub-matrix of $D$ attained by deleting the first rows up to $i-1$ and the last columns starting from $j+1$.

\begin{theorem}[Pairing Uniqueness Lemma \cite{vineyards}] \label{uniqueness_of_pairing}
    Given a linear ordering of the cells in a filtered  cell complex $X$, $(\sigma_i, \sigma_j)$ is a persistence pair if and only if $r_D(i,j) = 1$.
\end{theorem}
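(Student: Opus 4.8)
The plan is to reduce everything to the standard matrix-reduction picture and then show that the alternating rank expression $r_D(i,j)$ is an invariant of $D$ that detects the lowest nonzero entries of any reduced form of $D$. Recall that the persistence pairs are read off from a reduced matrix $R = DV$, where $V$ is upper-triangular and invertible over $\Z/2\Z$ (obtained by left-to-right column additions, as in the algorithm underlying \cite{delfinado1995incremental}): the pair $(\sigma_i,\sigma_j)$ is a persistence pair exactly when the lowest nonzero entry of column $j$ of $R$ lies in row $i$, i.e. $\mathrm{low}(j)=i$. So it suffices to prove that $r_D(i,j)=1$ if and only if $\mathrm{low}(j)=i$, which in particular forces $r_D$ to be independent of the choice of reduction and hence pins down the pairing uniquely.

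First I would establish that the lower-left submatrix ranks are unchanged by reduction, namely $\rank D_i^j = \rank R_i^j$ for all $i,j$, whence $r_D(i,j) = r_R(i,j)$. The key observation is that right-multiplication by the upper-triangular $V$ replaces each column only by itself plus a combination of \emph{earlier} columns, so the span of the first $j+1$ columns is preserved; restricting all of these columns to the rows $i,\dots,n$ preserves this equality of spans, and therefore the rank of the block $[i{:}n,0{:}j]$ agrees for $D$ and $R$. This reduces the whole statement to a purely combinatorial computation on the reduced matrix $R$.

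Next I would compute $\rank R_i^j$ combinatorially. Because $R$ is reduced, its lowest-one entries lie in pairwise distinct rows; I would argue that within the block $R_i^j$ the columns carrying a lowest-one in rows $\geq i$ are linearly independent and span the column space of the block, so that $\rank R_i^j$ equals the number of nonzero columns $c \leq j$ with $\mathrm{low}(c) \geq i$. Writing this count as $L(i,j)$, the differences $L(i,j)-L(i,j-1)$ and $L(i+1,j)-L(i+1,j-1)$ are the indicators of $\mathrm{low}(j)\geq i$ and $\mathrm{low}(j)\geq i+1$ respectively, so the alternating sum telescopes to $r_R(i,j) = [\mathrm{low}(j)=i]$. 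Combined with the previous step this yields $r_D(i,j) = [\mathrm{low}(j)=i]$, which is exactly the claimed equivalence.

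The main obstacle I anticipate is the combinatorial rank computation for the reduced matrix: one must carefully justify that the lowest-one columns contained in a lower-left block are not merely independent but also exhaust the rank of that block. A clean way to handle this is to process the columns of the block from right to left, using that a lowest-one in a row not yet hit cannot be cancelled by columns to its left, so each such column strictly increases the block rank, while columns whose lowest-one sits above row $i$ (or which vanish on rows $\geq i$) contribute nothing new. Establishing the spanning direction rigorously, and confirming that the $\Z/2\Z$ coefficients produce no degenerate cancellation, is where the real care is needed; the rank-invariance step and the telescoping are then routine.
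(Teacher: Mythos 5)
The paper offers no proof of this statement at all---it is quoted directly from \cite{vineyards}---and your argument is precisely the standard one from that source (and from Edelsbrunner--Harer): the persistence pairs are the lowest-one positions of a reduced matrix $R=DV$, the lower-left submatrix ranks are invariant under the upper-triangular column reduction, and the alternating sum telescopes to the indicator of $\mathrm{low}(j)=i$. Your proof is correct; the one place you overestimate the difficulty is the ``spanning'' step you flag as the main obstacle, which is in fact immediate: any column $c$ of $R$ with $\mathrm{low}(c)<i$ is identically zero on rows $i,\dots,n$ by the very definition of the lowest nonzero entry, so the nonzero columns of the block $R_i^j$ are exactly the columns with $\mathrm{low}(c)\geq i$, and these are independent because reduced columns have pairwise distinct lowest ones.
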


The ranks are usually computed by applying the column reduction algorithm~\cite{Edel08} to obtain the %
reduced matrix $R$ and using the property that $\rank D_i^j = \rank R_i^j$ under the operations of the algorithm. The persistence pairs can then be read off easily since $r_R(i,j) = 1$ if and only if the $i$th entry of the $j$th column of the reduced matrix is the lowest 1 of this column. However, in this paper, we can work directly with $r_D$.

\begin{corollary} \label{essential_corollary}
    If $r_D(i,j) \neq 1$ and $r_D(j,i) \neq 1$ for all $j$ then the cell $\sigma_i$ is essential.
\end{corollary}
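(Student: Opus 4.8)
The plan is to prove the contrapositive-style statement directly from the Pairing Uniqueness Lemma (Theorem~\ref{uniqueness_of_pairing}), using the characterization of persistence pairs via $r_D$. Recall that, by the incremental algorithm, when cells are added one at a time each cell $\sigma_i$ is either a \emph{birth} cell or a \emph{death} cell. A birth cell is \emph{essential} precisely when it has no matching death cell. The strategy is therefore to show that the two hypotheses $r_D(i,j)\neq 1$ for all $j$ and $r_D(j,i)\neq 1$ for all $j$ rule out $\sigma_i$ being paired in either role, forcing it to be an unpaired birth cell, i.e.\ essential.

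First I would unpack what each hypothesis means via Theorem~\ref{uniqueness_of_pairing}. The condition $r_D(i,j)\neq 1$ for all $j$ says that $(\sigma_i,\sigma_j)$ is never a persistence pair, so $\sigma_i$ is never the \emph{birth} cell of any pair; equivalently, whatever feature $\sigma_i$ might create is never killed. The condition $r_D(j,i)\neq 1$ for all $j$ says that $(\sigma_j,\sigma_i)$ is never a persistence pair, so $\sigma_i$ is never the \emph{death} cell of any pair; equivalently, $\sigma_i$ does not kill any earlier feature. Next I would invoke the dichotomy from~\cite{delfinado1995incremental} that every cell is either a birth cell or a death cell. Since $\sigma_i$ is not a death cell (it kills nothing), it must be a birth cell. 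Since as a birth cell it has no corresponding death cell (it is never the first coordinate of a pair), it is by definition essential.

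The main subtlety, and the step I would be most careful with, is ensuring the two hypotheses genuinely cover both the ``birth with no death'' and the ``not a death cell'' cases without a gap. The cleanest route is to argue: a non-essential cell is, by the pairing, either the birth member or the death member of exactly one persistence pair; in the former case there exists $j>i$ with $r_D(i,j)=1$, and in the latter there exists $j<i$ with $r_D(j,i)=1$. Contrapositively, if no such $j$ exists in either direction, $\sigma_i$ lies in no persistence pair, and the birth/death dichotomy then forces it to be an unpaired birth cell, hence essential. I would phrase the argument this way to make the logical coverage explicit.

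I do not anticipate a deep obstacle here, as the result is essentially a direct corollary of Theorem~\ref{uniqueness_of_pairing} combined with the incremental birth/death dichotomy. The only thing to watch is the bookkeeping of which index plays the birth and which plays the death role in $r_D$: the pair $(\sigma_i,\sigma_j)$ with $r_D(i,j)=1$ always has $i$ as birth and $j$ as death (with $i<j$), so scanning $r_D(i,\blank)$ tests whether $\sigma_i$ is ever a birth cell of a pair, while scanning $r_D(\blank,i)$ tests whether it is ever a death cell. Making sure both scans are quantified over \emph{all} $j$ (as in the statement) is what guarantees $\sigma_i$ is unpaired, completing the proof.
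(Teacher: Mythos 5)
Your proposal is correct and follows essentially the same route as the paper: use the Pairing Uniqueness Lemma to conclude that $\sigma_i$ belongs to no persistence pair in either role, then apply the birth/death dichotomy together with the observation that every death cell is necessarily paired (the paper justifies this by noting the filtration starts from the empty set) to conclude $\sigma_i$ is an unpaired birth cell, hence essential. Your version just spells out the index bookkeeping more explicitly.
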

\begin{proof}
The fact that every cell is either a birth or a death cell implies that $\sigma_i$ must be an unpaired birth or death cell. However, as every filtration begins as the empty set, there are no unpaired death cells.
\end{proof}

\section{ The Persistent Homology of Dual Filtered Complexes}
\label{sec:main_duality_result} 
 
Recall again that in standard homology and cohomology the coboundary map is the adjoint of the boundary map. Hence, given a consistent choice of bases for the chain and cochain groups, their matrix representations are related simply by taking the transpose.  
In~\cite{Vin}, another algebraic relationship is established between persistent homology and persistent relative cohomology, based on the observation that the filtration for relative cohomology reverses the ordering of cells in the total (co)boundary matrix.
The same reversal of ordering holds for the dual filtered cell complexes defined here, so we obtain a similar relationship between the persistence diagrams. 
Our proof of the correspondence between persistence pairs in dual filtrations uses the matrix rank function and pairing uniqueness lemma in a similar way to the combinatorial Helmoltz-Hodge decomposition of~\cite{oelsboeck}. 
Nonetheless, Theorem~\ref{main_theorem} interprets the underlying linear algebra in the setting of dual filtered complexes and only uses the concept of persistent homology without using the connection to persistent relative cohomology, which makes it more accessible. 
 
For this section suppose $(X,f)$ and $(X^*,g)$ are dual filtered cell complexes with $n+1$ cells. 
Suppose that a linear ordering $\sigma_0, \sigma_1, \ldots, \sigma_n$ 
of the cells in $X$ is compatible with the filtration $(X,f)$, and that $\sigma^*_n, \sigma^*_{n-1}, \ldots, \sigma^*_0$ is the dual linear ordering compatible with $g$. Let $D$ be the total boundary matrix of $X$ 
and $D^*$ be the total boundary matrix of $X^*$ with their respective orderings. 

\begin{remark}
    A useful indexing observation is that $\sigma^*_i$ is the $(n-i)$-th cell of the dual filtration.
\end{remark}

We denote by $D^\perp$ the anti-transpose of the matrix $D$, that is the reflection across the minor diagonal: $D^\perp_{i,j} = D_{n-j,n-i}.$  Anti-transposition is also the composition of standard matrix transposition with a reversal of the order of the columns and of the rows.

\begin{lemma} \label{minor_diagonal_lemma}
    The matrix $D^*$ is the anti-transpose $D^\perp$ of $D$, that is, $$D^*_{i,j} = D_{n-j,n-i} = D^{\perp}_{i,j}.$$
\end{lemma}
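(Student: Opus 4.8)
The plan is to unwind both sides of the claimed identity into statements about the facet relation $\vtl$ and then match them using the Face Reversal axiom of combinatorial duality. The whole argument is essentially bookkeeping on the indexing of the dual ordering, so the real care lies in tracking indices correctly; once the combinatorics is set up, the equivalence is immediate because both matrices have entries in $\{0,1\}$.

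First I would record the auxiliary fact that the facet relation itself is reversed under the duality bijection: for cells $\sigma, \tau \in X$,
\[ \sigma \vtl \tau \iff \tau^* \vtl \sigma^*. \]
This follows by combining the two axioms of Definition~\ref{combinatorial_dual}. Face Reversal gives $\sigma \preceq \tau \iff \tau^* \preceq \sigma^*$, and Dimension Reversal sends a codimension-$1$ pair to a codimension-$1$ pair, since $\dim\sigma^* - \dim\tau^* = (d - \dim\sigma) - (d - \dim\tau) = \dim\tau - \dim\sigma$. So a facet maps to a facet, and the displayed equivalence holds.

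Next I would unravel the left-hand side. By the definition of the total boundary matrix, $D^*_{i,j} = 1$ exactly when the $i$-th cell of the dual filtration is a facet of its $j$-th cell. Using the indexing remark that $\sigma^*_k$ is the $(n-k)$-th cell of the dual filtration — equivalently, that the $i$-th cell of the dual ordering is $\sigma^*_{n-i}$ — this reads
\[ D^*_{i,j} = 1 \iff \sigma^*_{n-i} \vtl \sigma^*_{n-j}. \]
Applying the facet-reversal fact above (with $\sigma = \sigma_{n-j}$ and $\tau = \sigma_{n-i}$) converts the right-hand condition into $\sigma_{n-j} \vtl \sigma_{n-i}$, which by the definition of $D$ is precisely $D_{n-j,\,n-i} = 1$. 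Since every entry of both matrices lies in $\{0,1\}$, this chain of equivalences gives $D^*_{i,j} = D_{n-j,n-i}$, and the latter equals $D^\perp_{i,j}$ by the definition of the anti-transpose, which completes the proof.

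The step I expect to require the most care is the index arithmetic. Because the dual ordering $\sigma^*_n, \sigma^*_{n-1}, \ldots, \sigma^*_0$ runs in the opposite direction, one must stay consistent about whether an index denotes a position in the dual filtration or the subscript of a dual cell; it is exactly the composition of this order reversal with the face reversal coming from duality that produces an anti-transpose rather than an ordinary transpose. As a sanity check I would verify the corner entries (for instance $i=0$ or $j=n$) to confirm the formula behaves correctly at the boundary of the matrix.
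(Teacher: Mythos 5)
Your proof is correct and follows essentially the same route as the paper: the same chain of equivalences $D^*_{i,j}=1 \iff \sigma^*_{n-i}\vtl\sigma^*_{n-j} \iff \sigma_{n-j}\vtl\sigma_{n-i} \iff D_{n-j,n-i}=1$, using the same indexing remark. The only difference is that you explicitly justify that the facet relation (not just the face relation) is reversed under duality via Dimension Reversal, a detail the paper leaves implicit; this is a welcome elaboration rather than a different argument.
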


\begin{proof}
    The equivalences below follow from the definition of $D$, of dual cell complexes, and the above remark.    
    $$D_{n-j,n-i} = 1 \Leftrightarrow \sigma_{n-j} \vtl  
    \sigma_{n-i} \Leftrightarrow \sigma^*_{n-i} \vtl  \sigma^*_{n-j}  \Leftrightarrow D^*_{i,j} = 1.$$
\end{proof}

\begin{lemma} \label{rank_lemma}
The sub-matrices defined in Section~\ref{sec:computations} satisfy  $$ (D_i^j)^\perp = (D^\perp)_{n-j}^{n-i} $$ and thus
   $$ \rank \, D_i^j = \rank\, (D^\perp)_{n-j}^{n-i}$$ and  
   $$ r_D(i,j) = r_{D^\perp}(n-j,n-i). $$
\end{lemma}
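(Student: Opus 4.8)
The plan is to treat Lemma~\ref{rank_lemma} as three nested claims in which the first displayed identity carries all the content and the other two follow mechanically. Once $(D_i^j)^\perp = (D^\perp)_{n-j}^{n-i}$ is established, the rank equality is immediate: anti-transposition is ordinary transposition composed with a reversal of the row order and of the column order, each of which is a permutation and hence rank-preserving, so $\rank D_i^j = \rank (D_i^j)^\perp = \rank (D^\perp)_{n-j}^{n-i}$. The final equation $r_D(i,j) = r_{D^\perp}(n-j,n-i)$ is then obtained by substituting this rank identity into the four-term definition of $r_D$.

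First I would prove the sub-matrix identity $(D_i^j)^\perp = (D^\perp)_{n-j}^{n-i}$ by direct index tracking. Recall that $D_i^j$ keeps rows $i,\dots,n$ and columns $0,\dots,j$, so it has size $(n-i+1)\times(j+1)$, while the window $(D^\perp)_{n-j}^{n-i}$ keeps rows $n-j,\dots,n$ and columns $0,\dots,n-i$ of $D^\perp$, of size $(j+1)\times(n-i+1)$ — already the shape of an anti-transpose. Introducing local coordinates $(s,t)$ with $s\in\{0,\dots,j\}$ and $t\in\{0,\dots,n-i\}$ for both sides, I would compute that the local $(s,t)$-entry of $(D_i^j)^\perp$ is $D_{\,n-t,\;j-s}$, while the local $(s,t)$-entry of $(D^\perp)_{n-j}^{n-i}$ is $D^\perp_{(n-j)+s,\;t} = D_{\,n-t,\;j-s}$ by the defining relation $D^\perp_{a,b}=D_{n-b,n-a}$ from Lemma~\ref{minor_diagonal_lemma}. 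Since these coincide for all admissible $s,t$, the matrices are equal. Equivalently, one checks that the two windows select the same entries of $D$ and that anti-transposition matches their arrangement.

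For the last identity I would expand both alternating sums and match them term by term using $\rank D_a^b = \rank (D^\perp)_{n-b}^{n-a}$. Applied to the four windows of $r_D(i,j)$, the corner $D_i^j$ maps to $(D^\perp)_{n-j}^{n-i}$, the term $D_i^{j-1}$ to $(D^\perp)_{n-j+1}^{n-i}$, the term $D_{i+1}^{j}$ to $(D^\perp)_{n-j}^{n-i-1}$, and $D_{i+1}^{j-1}$ to $(D^\perp)_{n-j+1}^{n-i-1}$. These are precisely the four windows occurring in $r_{D^\perp}(n-j,n-i)$ with precisely the matching signs; the only discrepancy is that the two mixed terms appear in swapped order, which does not affect the alternating sum.

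The main obstacle is purely the index bookkeeping in the first step: one must be careful about which rows and columns each window retains, about the reindexing induced by anti-transposition, and — in the last step — about the off-by-one shifts ($n-j$ versus $n-j+1$, and $n-i$ versus $n-i-1$) produced by the $\pm 1$ in the definition of $r_D$. Fixing one consistent system of local coordinates at the outset and reusing it throughout is what keeps these nested sub-matrix manipulations error-free.
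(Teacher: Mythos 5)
Your proposal is correct and follows essentially the same route as the paper: establish the window identity $(D_i^j)^\perp = (D^\perp)_{n-j}^{n-i}$, deduce the rank equality from the fact that anti-transposition is a composition of rank-preserving operations, and substitute into the four-term formula for $r_D$. The only difference is that you verify the first identity by explicit entry-wise index tracking where the paper simply asserts it via slice notation; your bookkeeping (local entry $D_{n-t,\,j-s}$ on both sides, and the harmless swap of the two negative middle terms) is accurate.
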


\begin{proof}
The first statement follows from $$(D_i^j)^\perp = (D[i:n, 0:j])^\perp = D^\perp[(n-j):n, 0:(n-i)] = (D^\perp)_{n-j}^{n-i}. $$ 
The second statement follows because anti-transposition is attained by composing the rank preserving operations of  transposition and row and column permutations. The third statement follows from the second through: 
\begin{align*}
 r_D(i,j) &= \rank D_i^j - \rank D_i^{j-1} - \rank D_{i+1}^j + \rank D_{i+1}^{j-1} \\ 
 &= \rank (D^\perp)_{n-j}^{n-i} - \rank (D^\perp)_{n-j+1}^{n-i} - \rank (D^\perp)_{n-j}^{n-i-1} + \rank (D^\perp)_{n-j+1}^{n-i-1} \\
 &= r_{D^\perp}(n-j,n-i).   
\end{align*}
\end{proof}

\begin{theorem}[Persistence of Dual Filtrations] \label{main_theorem}
    Let $(X, f)$ and $(X^*,g)$ be dual filtered complexes with compatible ordering $\sigma_0, \sigma_1, \dots, \sigma_n$. Then
    \begin{enumerate}
        \item $(\sigma_i, \sigma_j)$ is a persistence pair in the filtered complex $(X,f)$ if and only if $(\sigma^*_j, \sigma^*_i)$ is a persistence pair in $(X^*,g)$.
        \item $\sigma_i$ is essential in $(X,f)$ if and only if $\sigma_i^*$ is essential in $(X^*,g)$.
    \end{enumerate}
\end{theorem}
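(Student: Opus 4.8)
The plan is to reduce both statements to the rank characterisation of persistence pairs supplied by the Pairing Uniqueness Lemma (Theorem~\ref{uniqueness_of_pairing}) and then transport that characterisation across the duality using Lemmas~\ref{minor_diagonal_lemma} and~\ref{rank_lemma}. The one thing I would keep straight throughout is the indexing: by the Remark, the cell $\sigma^*_i$ sits in position $n-i$ of the dual ordering, so positions in the filtration of $(X^*,g)$ and the subscripts on the dual cells are related by the relabelling $k \mapsto n-k$.

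For part (1), I would assemble a chain of equivalences. Starting from $(\sigma_i,\sigma_j)$ being a persistence pair in $(X,f)$, Theorem~\ref{uniqueness_of_pairing} gives $r_D(i,j)=1$; Lemma~\ref{rank_lemma} rewrites this as $r_{D^\perp}(n-j,n-i)=1$; and Lemma~\ref{minor_diagonal_lemma}, namely $D^*=D^\perp$, turns it into $r_{D^*}(n-j,n-i)=1$. Applying Theorem~\ref{uniqueness_of_pairing} now to the filtered complex $(X^*,g)$, the condition $r_{D^*}(n-j,n-i)=1$ says exactly that the cell in position $n-j$ and the cell in position $n-i$ of the dual ordering form a persistence pair. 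By the indexing observation those cells are $\sigma^*_{j}$ and $\sigma^*_{i}$ respectively, so this reads: $(\sigma^*_j,\sigma^*_i)$ is a persistence pair in $(X^*,g)$. Since every step is an equivalence, the claimed correspondence follows.

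For part (2), I would argue that essential cells are precisely the unpaired cells and that the bijection of part (1) preserves being paired. Concretely, if $\sigma_i$ is paired in $(X,f)$ then either $(\sigma_i,\sigma_j)$ or $(\sigma_k,\sigma_i)$ is a persistence pair for some index; by part (1) the corresponding dual pair is $(\sigma^*_j,\sigma^*_i)$ or $(\sigma^*_i,\sigma^*_k)$, so in each case $\sigma^*_i$ is also paired, and running the same reasoning backwards gives the converse. Hence $\sigma_i$ is unpaired if and only if $\sigma^*_i$ is unpaired. Finally, as in the proof of Corollary~\ref{essential_corollary}, every cell is a birth or a death cell, and because every filtration starts from the empty complex there are no unpaired death cells; thus an unpaired cell is exactly an essential (unpaired birth) cell. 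Combining these, $\sigma_i$ is essential in $(X,f)$ if and only if $\sigma^*_i$ is essential in $(X^*,g)$.

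The main obstacle is not any deep argument but the bookkeeping of indices: one must consistently distinguish a cell's subscript from its position in each of the two opposite orderings, and verify that the double relabelling $k\mapsto n-k$ induced by the anti-transpose matches the order reversal $\sigma^*_n,\dots,\sigma^*_0$ of the dual filtration. Once the Remark is used to pin down that $r_{D^*}(n-j,n-i)=1$ corresponds to the pair $(\sigma^*_j,\sigma^*_i)$, and not, say, to $(\sigma^*_{n-j},\sigma^*_{n-i})$, everything else is a direct substitution into results already proved.
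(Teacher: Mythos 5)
Your proposal is correct and follows essentially the same route as the paper: both reduce the statement to the rank characterisation of Theorem~\ref{uniqueness_of_pairing}, transport it via Lemmas~\ref{minor_diagonal_lemma} and~\ref{rank_lemma}, and handle the indexing with the observation that $\sigma^*_i$ sits in position $n-i$ of the dual ordering. Your part~(2) is phrased through the pair bijection of part~(1) rather than directly through the rank conditions of Corollary~\ref{essential_corollary}, but it rests on the identical fact that essential cells are exactly the unpaired ones, so the arguments coincide in substance.
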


\begin{proof}
    Lemma \ref{rank_lemma} implies that $r_{D}(i,j) = r_{D^*}(n-j, n-i)$. Therefore, $$r_{D}(i,j) = 1 \Leftrightarrow r_{D^*}(n-j, n-i) = 1.$$
    By the Pairing Uniqueness Lemma \ref{uniqueness_of_pairing}, the above implies that $(\sigma_i, \sigma_j)$ is a persistence pair whenever the $(n-j)$-th cell of the dual filtration $(X^*, g)$ is paired with the $(n-i)$-th, thus proving Part (1). For Part (2), Lemma \ref{rank_lemma} also tells us that the following two statements are equivalent:
    \begin{itemize} 
        \item Both $r_D(i,j) \neq 1$ and $r_D(j,i) \neq 1$ for all $j$.
        \item  Both $r_{D^*}(n-j, n-i) \neq 1$ and $r_{D^*}(n-i, n-j) \neq 1$ for all $n-j$.
    \end{itemize}
    By Corollary \ref{essential_corollary}, this means that $\sigma_i$ is an essential cell in $(X,f)$ if and only if the $(n-i)$-th cell $\sigma^*_i$ is essential in the dual filtration $(X^*,g)$.
\end{proof}

\begin{corollary}
    Let $(X,f)$ and $(X^*,g)$ be dual filtered complexes. Then 
    \begin{enumerate}
        \item $$[f(\sigma_i), f(\sigma_j)) \in \mathsf{Dgm}^k_\mathbf{F}( f) \Leftrightarrow [g(\sigma^*_j), g(\sigma^*_i)) \in \mathsf{Dgm}^{d-k-1}_\mathbf{F}(g).$$
        \item $$[f(\sigma_i), \infty) \in \mathsf{Dgm}^k_\infty(f) \Leftrightarrow [g(\sigma^*_i), \infty) \in \mathsf{Dgm}^{d-k}_\infty(g).$$
    \end{enumerate}
\end{corollary}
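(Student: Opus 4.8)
The plan is to unpack each persistence-diagram membership into the combinatorial language of persistence pairs and essential cells, apply Theorem~\ref{main_theorem}, and then track how the homological degrees transform under the duality. The bridge between intervals and cells is the dictionary of Section~\ref{sec:computations}: a finite interval $[f(\sigma_i), f(\sigma_j)) \in \mathsf{Dgm}_\mathbf{F}(f)$ corresponds exactly to a persistence pair $(\sigma_i, \sigma_j)$, and an essential interval $[f(\sigma_i), \infty) \in \mathsf{Dgm}_\infty(f)$ corresponds to an unpaired birth cell $\sigma_i$.

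The first thing I would record is the degree bookkeeping. Since a cell of dimension $k$ either gives birth to a $k$-dimensional feature or kills a $(k-1)$-dimensional class, a degree-$k$ finite interval arises from a pair $(\sigma_i, \sigma_j)$ with $\dim(\sigma_i) = k$ (the birth cell) and $\dim(\sigma_j) = k+1$ (the death cell), whereas a degree-$k$ essential interval arises from a birth cell $\sigma_i$ with $\dim(\sigma_i) = k$. Making this explicit at the outset is important, as it is exactly where the degree shift in the statement originates.

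For Part (1), I would read the claim as a chain of biconditionals. Membership $[f(\sigma_i), f(\sigma_j)) \in \mathsf{Dgm}^k_\mathbf{F}(f)$ is equivalent to $(\sigma_i, \sigma_j)$ being a persistence pair with $\dim(\sigma_i) = k$ and $\dim(\sigma_j) = k+1$. By the biconditional in Theorem~\ref{main_theorem}(1) this holds if and only if $(\sigma^*_j, \sigma^*_i)$ is a persistence pair in $(X^*, g)$. Its birth cell is now $\sigma^*_j$, and by the Dimension Reversal property of combinatorially dual complexes $\dim(\sigma^*_j) = d - (k+1) = d - k - 1$, so this pair is equivalent to $[g(\sigma^*_j), g(\sigma^*_i)) \in \mathsf{Dgm}^{d-k-1}_\mathbf{F}(g)$. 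Chaining these equivalences gives Part (1) in both directions at once.

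Part (2) runs the same way but with no shift by one: $[f(\sigma_i), \infty) \in \mathsf{Dgm}^k_\infty(f)$ is equivalent to $\sigma_i$ being essential with $\dim(\sigma_i) = k$, Theorem~\ref{main_theorem}(2) makes this equivalent to $\sigma^*_i$ being essential in $(X^*, g)$, and Dimension Reversal gives $\dim(\sigma^*_i) = d - k$, placing the interval in $\mathsf{Dgm}^{d-k}_\infty(g)$. The step I expect to require the most care is reconciling the dimension reversal $\dim \mapsto d - \dim$ with the fact that duality swaps the roles of birth and death cell within a pair: because the degree of a finite interval is read off from the lower-dimensional birth cell, dualizing $(\sigma_i, \sigma_j)$ turns the former death cell $\sigma_j$ into the new birth cell $\sigma^*_j$, and it is the dimension of \emph{this} cell, $d - k - 1$ rather than $d - k$, that fixes the dual degree.
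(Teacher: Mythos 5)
Your proposal is correct and follows the same route as the paper: the paper's proof likewise identifies a finite interval with a persistence pair (and an essential interval with an unpaired birth cell) and then invokes Theorem~\ref{main_theorem}, with the degree bookkeeping you spell out appearing in the remark immediately after the corollary. You have simply made explicit the dimension-reversal computation ($\dim(\sigma^*_j) = d-k-1$ for finite pairs, $\dim(\sigma^*_i)=d-k$ for essential cells) that the paper leaves implicit.
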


\begin{proof}
    Note that for a persistence pair $(\sigma_i, \sigma_j)$,
    found for an ordering compatible with the function $f$, 
    the birth value is $f(\sigma_i)$ and the death value is $f(\sigma_j)$. The result then follows directly from Theorem \ref{main_theorem}.
\end{proof}

\begin{remark}
It is worth  noting that there is a dimension shift between essential and non-essential pairs coming from the fact that the birth cell defines the dimension of a homological feature. 
For finite persistence pairs, the birth cell changes from $\sigma_i$ (of dimension $k$) to $\sigma_j^*$ (of dimension $d-(k+1)$) in the dual, while for an essential cycle, the birth cell in the dual is $\sigma^*_i$. 
This dimension shift also appears in our results on images later on. 
\end{remark}

\section{Filtered Cell Complexes from Digital Images} 
\label{sec:grayscale_digital_image_constructions}

As described in the introduction, the motivating application for the duality results of this paper is grayscale digital image analysis. This section begins with the definition of grayscale digital images and describes the two standard ways to model such images by  cubical complexes as well as the modifications required to make these dual filtered complexes. 

\begin{definition}
A \textbf{$d$-dimensional grayscale digital image} of size $(n_1, n_2, \ldots, n_d)$ is an $\R$-valued array $\mathcal{I} \in M_{n_1 \times n_2 \times \ldots \times n_d}(\R)$. Equivalently, it is a real-valued function on a $d$-dimensional rectangular grid $$\mathcal{I} : I = \llbracket 1, n_1\rrbracket \times \llbracket1, n_2\rrbracket \times \ldots \times \llbracket1, n_d\rrbracket \to \R$$ where $\llbracket1, n_i\rrbracket$ is the set $\Set{ k \in \mathbb{N} \mid 1 \leq k \leq n_i}$. The index set, $I$, of $\mathcal{I}$ is also called the \textbf{image domain}.  
\end{definition} 

Recall that elements $p \in I$ are called \textbf{pixels} when $d$=2, \textbf{voxels} if $d\geq 3$,  and the value $\mathcal{I}(p) \in \R$ is the \textbf{grayscale value} of $p$.

One would like to use persistent homology to analyse such images via their sub-level sets. However, the canonical topology on $I \subseteq \Z^d \subset \R^d$ makes it a totally disconnected discrete space. To induce a meaningful topology on the image that better represents the perceived connectivity of the voxels, grayscale digital images are modelled by regular cubical complexes \cite{kovalevsky_finite_1989}. 

\begin{definition}
\label{cubcomp}
An \textbf{elementary $k$-cube} $\sigma \subset \R^{d}$ is the product of $d$ elementary intervals, $$\sigma = e_1 \times e_2  \times \ldots \times e_d$$ such that $k$ of the intervals have the form  $e_i = [l_i,l_i+1]$ and $d-k$ are degenerate, $e_i = [l_i,l_i]$. 

A \textbf{cubical complex} $X \subset \R^d$ is a cell complex consisting of a set of elementary $k$-cubes, such that all faces of $\sigma \in X$ are also in $X$, and such that all vertices of $X$ are related by integer offsets.
\end{definition}

\subsection{Top-cell and Vertex Constructions}

There are two common ways to build a filtered cubical complex from an image $\mathcal{I}: I \longrightarrow \R$. One method is to represent the voxels as vertices of the cubical complex
as in \cite{Robins_DMT_images}. We call this cubical complex the vertex construction, or V-construction for short. 
The second method takes voxels as top-dimensional cells: we call it the  top-cell construction, or T-construction. 
It is shown in \cite{kovalevsky_finite_1989}, that the vertex construction corresponds to the graph-theoretical direct adjacency used in traditional digital image processing and the top-cell construction to the indirect adjacency model. These adjacency models are also respectively referred to as the open and closed digital topologies.
 
An example of how each construction is built from an image is given in Figure \ref{fig:V_and_T_constructions}. 
The explicit definitions of such constructions are given below. 

\begin{figure}
    \centering
    \includegraphics[width=\linewidth]{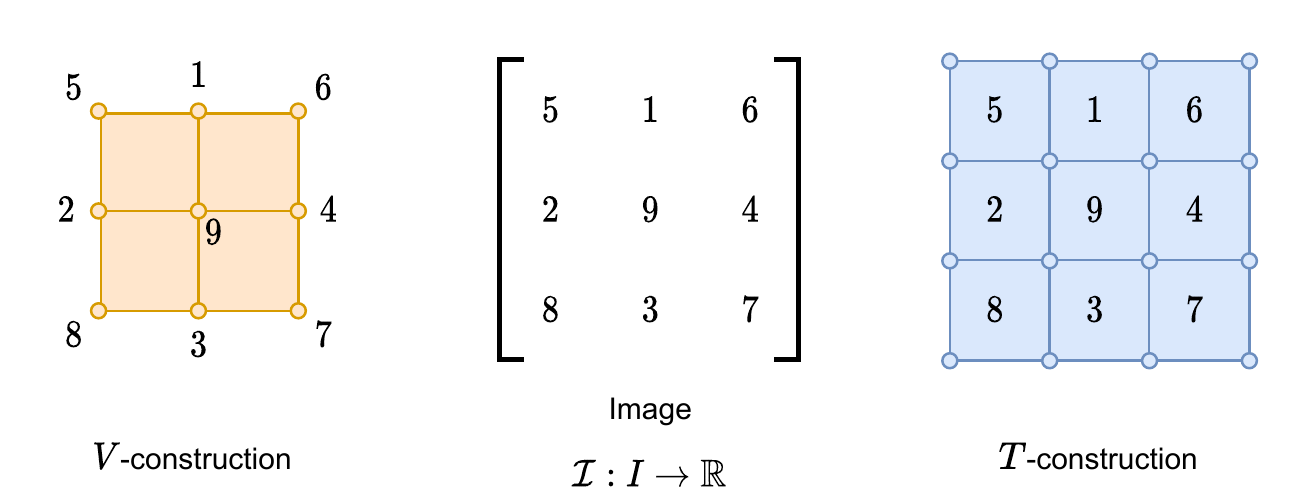} \vspace{3em} \\
    \includegraphics[scale=0.25]{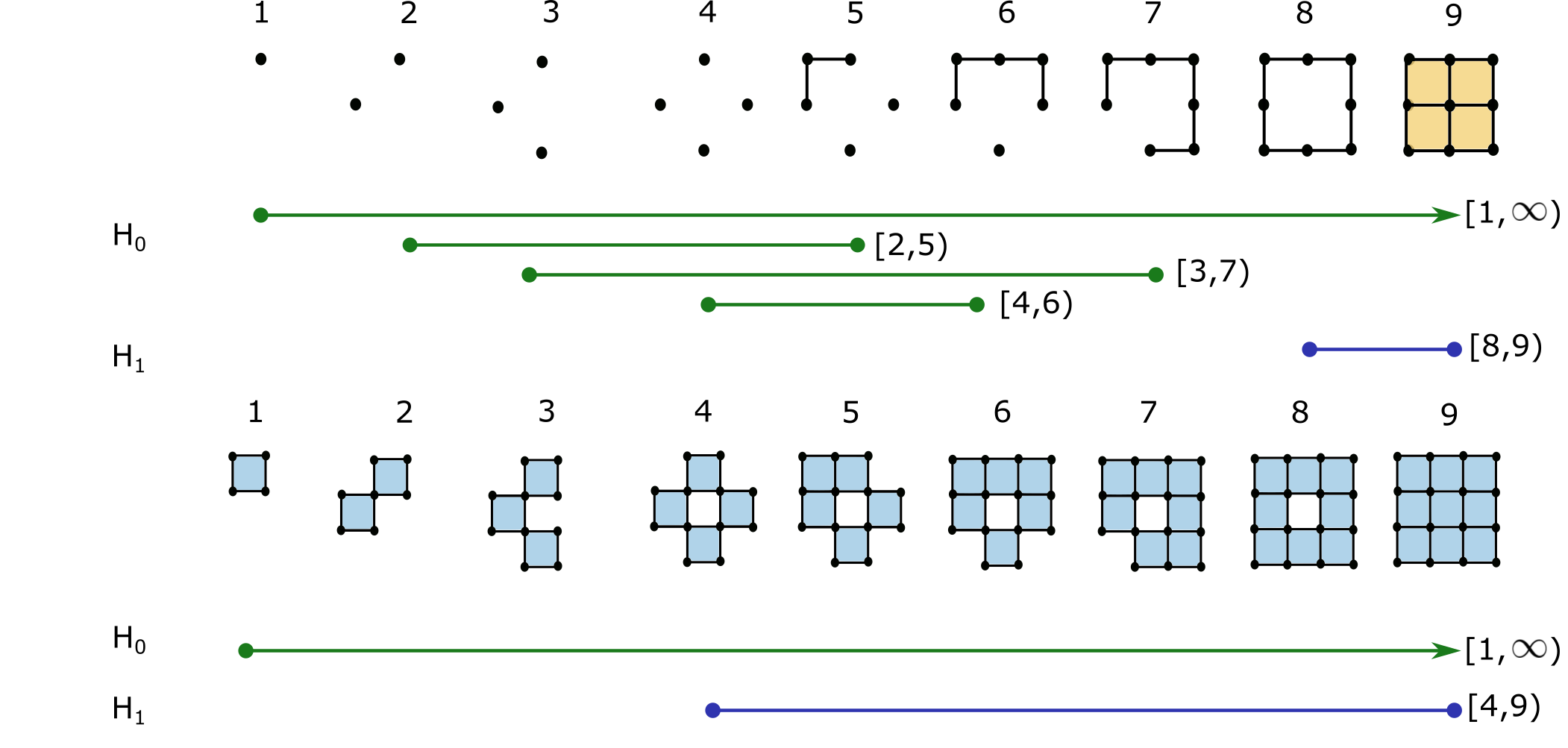}
    \caption{Top: The V- and T-constructions generated by an image $\mathcal{I} : I \to \R$ with the values of $\mathcal{I}$ indicated on the vertices and the top-dimensional cells, respectively.\newline  Middle: the 
    filtration 
    $V(\mathcal{I}) : V(I) \to \R$ and the corresponding persistence pairs.\newline Bottom: the 
    filtration 
    $T(\mathcal{I}) : T(I) \to \R$ and the corresponding persistence pairs.}
    \label{fig:V_and_T_constructions}
\end{figure}

\begin{definition}
     Given a $d$-dimensional grayscale digital image $\mathcal{I}: I \to \R$, of size $(n_1,n_2,\dots,n_d)$, 
     the \textbf{V-construction} is a filtered cell complex $(V(I),V(\mathcal{I}))$ defined as follows.
     \begin{enumerate}
         \item $V(I)$ is a cubical complex built from an array of $(n_1 - 1) \times \ldots \times (n_d-1)$ elementary $d$-cubes and all their faces. 
         \item The vertices $\upsilon^{(0)} \in V(I)$ are indexed exactly by the elements $p \in I$, and we define the function $V(\mathcal{I})$ firstly on these vertices as, $$V(\mathcal{I})(\upsilon^{(0)}) = \mathcal{I}(p).$$
         Then for an elementary $k$-cube $\sigma$, the function takes the maximal value of its vertices 
         $$ V(\mathcal{I})(\sigma) = \max\limits_{\upsilon^{(0)} \preceq \sigma} V(\mathcal{I})(\upsilon^{(0)}).$$
         This ensures that $V(\mathcal{I})$ is monotonic with respect to the face relation on $V(I)$. 
     \end{enumerate}
\end{definition} 

\bigskip

\begin{definition}
     Given a $d$-dimensional grayscale digital image $\mathcal{I}: I \to \R$, of size $(n_1,n_2,\dots,n_d)$, 
     the \textbf{T-construction}  is a filtered cell complex $(T(I),T(\mathcal{I}))$ defined as follows.
     \begin{enumerate}
         \item $T(I)$ is a cubical complex built from the array of $n_1 \times \ldots \times n_d$ elementary $d$-cubes and all their faces. 
         \item The $d$-cells $\tau^{(d)} \in T(I)$ are indexed exactly by the elements $p \in I$, and we define the function $T(\mathcal{I})$ firstly on these top-dimensional cells as, $$T(\mathcal{I})(\tau^{(d)}) = \mathcal{I}(p).$$
         Then for an elementary $k$-cube $\sigma$, the function takes the smallest value of any adjacent $d$-cubes, 
         $$ T(\mathcal{I})(\sigma) = \min\limits_{\sigma \preceq\tau^{(d)}} T(\mathcal{I})(\tau^{(d)}).$$ 
         This ensures that $T(\mathcal{I})$ is monotonic with respect to the face relation on $T(I)$. 
     \end{enumerate}
\end{definition} 

The next section describes how to modify the original image and take quotients to obtain dual complexes and filtrations. 

\subsection{Modifications for Duality} \label{sec:duality_complexes}

The cubical complexes defined using the top-cell and vertex constructions are not strictly dual to each other in the standard context of a rectangular digital image domain due to the presence of a boundary. There are two methods to resolve this issue. One is to treat the image domain as periodic and identify opposite faces of the rectangular domain. 
This makes $V(I)$ and $T(I)$ into dual cubical complexes with $d$-cubes and vertices in both cases indexed by $I$; their underlying space is the $d$-torus. 
Taking $V(\mathcal{I})$ as the function on $V(I)$ and 
$T(-\mathcal{I})$ as the function on $T(I)$, we also obtain dual filtrations and Theorem~\ref{main_theorem} can be applied to deduce the persistence pairs of one filtered complex 
from the other.

A more commonly used approach to handling the boundary of a convex domain in $\mathbb R^d$ is to take the quotient identifying the boundary to a point and thus treat the convex domain as a subset of the $d$-sphere. 
To obtain dual cell complexes on the $d$-sphere, we 
increase the size of the image domain before taking the quotient modulo the boundary. 
The image function is assigned a large arbitrary value on these extra voxels and dual filtered complexes are obtained by considering $\mathcal{I}$ in one construction and $-\mathcal{I}$ in the other as detailed in the following definitions and results.

Let $\mathcal{I} : I \to \R$ be a grayscale digital image with index set $I = \llbracket 1, n_1\rrbracket \times \ldots \times \llbracket 1, n_d\rrbracket $, and set $$ N > \max_{p \in I} \, \mathcal{I}(p).$$ 
\begin{definition}
    The \textbf{padded image} $\mathcal{I}^{\mathbf{P}} : I^{\mathbf{P}} \to \R$ has image domain $I^{\mathbf{P}} = \llbracket 0, n_1+1\rrbracket \times \ldots \times \llbracket 0, n_d +1 \rrbracket$ and 
    $$\mathcal{I}^{\mathbf{P}}(p) = \begin{cases} \mathcal{I}(p), & \text{for $p \in I$} \\
    N, & \text{for $p \in I^{\mathbf{P}}\setminus I$} 
    \end{cases}
    $$
\end{definition}

As shown in Figure~\ref{fig:overlay}, the padded image is simply obtained by adding a shell of $N$-valued voxels to $\mathcal{I}$. 
We denote the V- and T-constructions over the padded image by $V(\mathcal{I}^{\mathbf{P}}) : V(I^\mathbf{P}) \to \R$ and $T(\mathcal{I}^{\mathbf{P}}) : T(I^\mathbf{P})  \to \R$, respectively.

\begin{figure}[H]
 \centering
 \includegraphics[width=\linewidth]{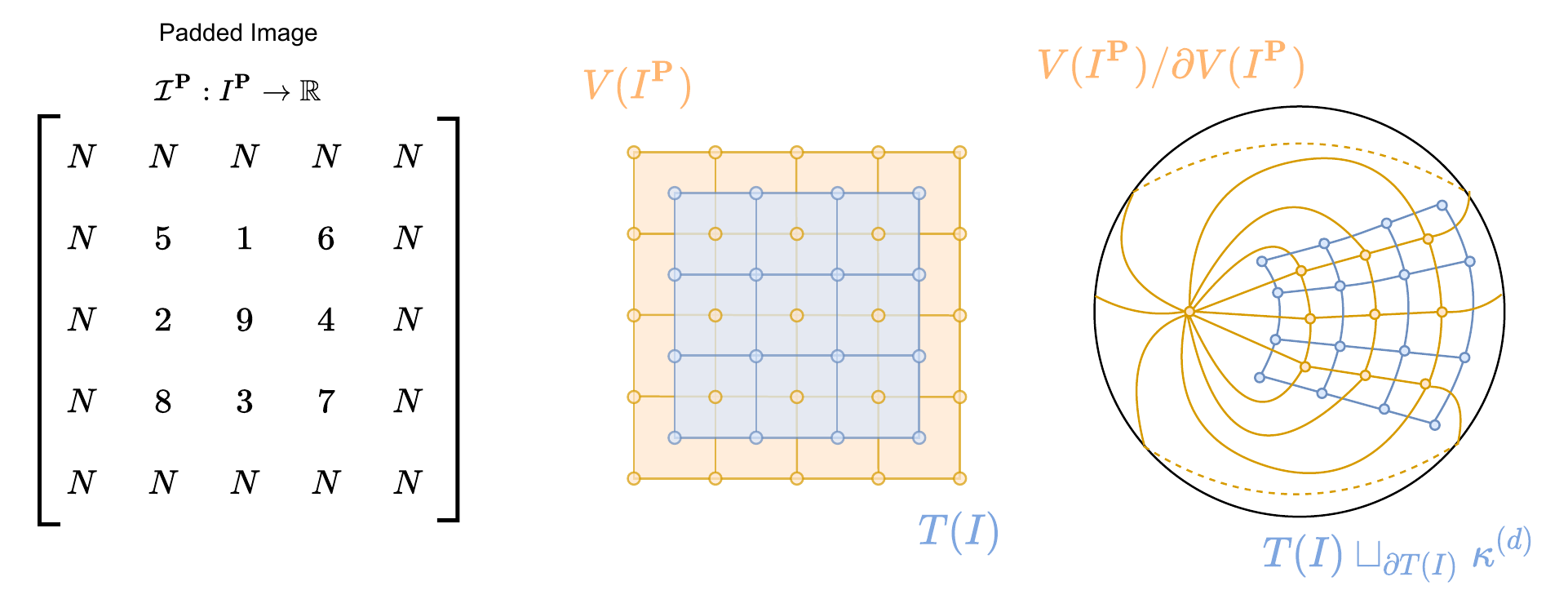}
 \caption{The transformation of the V- and T-construction into dual cell complexes (right) using the padded image (left) and a mapping from $V(I^\mathbf{P})$ to $T(I)$ (center).}
 \label{fig:overlay}
\end{figure}
 
Let $T(I) \sqcup_{\partial T(I)} \kappa^{(d)}$ denote the cell complex obtained from  $T(I)$ by attaching a $d$-cell $\kappa^{(d)}$ along the boundary $\partial T(I)$ (see \cite[p.5]{hatcher2002} for details). Furthermore, let $V(I^\mathbf{P}) / \partial V(I^\mathbf{P})$ be the quotient cell complex obtained by identifying all points of the boundary (see \cite[p.8]{hatcher2002} for details).
Note that both these modifications create cells that are not elementary cubes.
 
\begin{lemma}
    Given a rectangular digital image domain, $I$, the quotient, $V(I^\mathbf{P}) / \partial V(I^\mathbf{P})$, of the padded V-construction modulo its boundary is the combinatorial dual of $T(I) \sqcup_{\partial T(I)} \kappa^{(d)}$. 
\end{lemma}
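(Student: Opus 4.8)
The plan is to exhibit an explicit dimension-reversing, face-reversing bijection between the cells of the two complexes and then verify the two conditions of Definition~\ref{combinatorial_dual} directly. First I would enumerate the cells on each side. Collapsing a subcomplex to a point leaves one new $0$-cell together with every open cell of $V(I^\mathbf{P})$ that is \emph{not} contained in $\partial V(I^\mathbf{P})$; an elementary cube $\sigma = e_1 \times \cdots \times e_d$ lies in the boundary exactly when some $e_i$ is degenerate at a boundary value $0$ or $n_i+1$, so the surviving ``interior'' cells are those in which no coordinate is pinned to $0$ or $n_i+1$. On the other side, $T(I)\sqcup_{\partial T(I)}\kappa^{(d)}$ consists of all cells of $T(I)$ together with the single attached $d$-cell $\kappa^{(d)}$, whose faces are precisely the cells of $\partial T(I)$.

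Next I would define the bijection. Away from the boundary I use the standard perpendicular cubical (Poincar\'e-type) duality: an interior $k$-cell $\sigma$ spanning the coordinate directions $S$ (with $|S| = k$) at position $(l_1,\dots,l_d)$ is sent to the $(d-k)$-cell $\sigma^*$ of $T(I)$ obtained by making the $S$-directions degenerate at $l_i + \tfrac12$ and turning the complementary directions into unit intervals centred at $l_i$. A short check shows $\sigma^*$ lands in $T(I)$, that the assignment is a bijection onto all cells of $T(I)$, and that $\dim \sigma^* = d - \dim\sigma$. I complete the bijection by sending the collapsed boundary point $*$ (dimension $0$) to $\kappa^{(d)}$ (dimension $d$). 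This immediately gives the Dimension Reversal condition.

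For Face Reversal I would split into two cases. When both cells are interior, the equivalence $\sigma \preceq \tau \iff \tau^* \preceq \sigma^*$ reduces, coordinate by coordinate, to four elementary subcases comparing the (degenerate or non-degenerate) interval $e_i(\sigma)$ with $e_i(\tau)$; in each case a one-line computation shows the primal inclusion $e_i(\sigma)\subseteq e_i(\tau)$ holds if and only if the dual inclusion $e_i(\tau^*)\subseteq e_i(\sigma^*)$ does, and conjoining over all $i$ yields the claim. The cases involving $*$ and $\kappa^{(d)}$ are the crux: here I must show that $* \preceq \sigma$ in the quotient (equivalently, $\overline\sigma$ meets $\partial V(I^\mathbf{P})$) holds if and only if $\sigma^* \preceq \kappa^{(d)}$ (equivalently, $\sigma^* \in \partial T(I)$). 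Tracking the construction, $\overline\sigma$ touches the boundary of the padding box exactly when some spanning direction $i \in S$ has $l_i \in \{0, n_i\}$, and this is precisely the condition that the degenerate coordinate $l_i + \tfrac12$ of $\sigma^*$ equals $\tfrac12$ or $n_i + \tfrac12$, i.e.\ that $\sigma^*$ lies on $\partial T(I)$. These two incidence conditions therefore match, completing Face Reversal.

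The main obstacle I anticipate is exactly this boundary bookkeeping: the bulk of the correspondence is the textbook self-duality of the cubical grid, but the two ``extra'' cells $*$ and $\kappa^{(d)}$ are non-cubical, and one must verify that collapsing $\partial V(I^\mathbf{P})$ and attaching $\kappa^{(d)}$ produce matching incidences at the boundary. A related technical point worth addressing is regularity: collapsing several boundary faces of a single cube to one point can make the naive characteristic maps non-injective, so one should either realise the dual abstractly from the (regular) face poset of $T(I)\sqcup_{\partial T(I)}\kappa^{(d)}$ or observe that Definition~\ref{combinatorial_dual} is purely a statement about dimensions and the face partial order, both of which are preserved by the bijection constructed above.
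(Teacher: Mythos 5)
Your proposal is correct and takes essentially the same approach as the paper's proof: the same half-integer-shift bijection between the interior cells of $V(I^\mathbf{P})$ and the cells of $T(I)$, the same pairing of the collapsed boundary vertex with $\kappa^{(d)}$, and the same verification that $\overline{\sigma}$ meeting $\partial V(I^\mathbf{P})$ corresponds exactly to $\sigma^* \in \partial T(I)$. Your coordinate-by-coordinate check of face reversal and the remark on regularity of the quotient are slightly more explicit than the paper's treatment but do not change the argument.
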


\begin{proof}
Each elementary $k$-cube, $\sigma \in V(I^{\mathbf{P}})$ takes the form $$ \sigma = e_1 \times \ldots \times e_d, \quad e_i = [l_i, l_i + 1] \text{ or } e_i = [p_i, p_i] $$ 
where $k$ of the elementary intervals are non-degenerate with $l_i \in \{0, \ldots, n_i \}$ and $(d-k)$ are degenerate with $p_i \in \{0, \ldots, n_i +1 \}$. 
Note that $\sigma \in \partial V(I^{\mathbf{P}})$ if at least one degenerate interval has $p_i = 0$ or $(n_i+1)$. 
Now consider the following cell constructed from  $\sigma$: $$\sigma^* = e_1^* \times \ldots \times e_d^*, \quad e_i^* = [l_i + \tfrac{1}{2} , l_i +\tfrac{1}{2} ] \text{ or } [p_i-\tfrac{1}{2}, p_i + \tfrac{1}{2}] $$ 
with $l_i$ and $p_i$ as defined above.
This cell has $k$ degenerate intervals and $(d-k)$ non-degenerate ones so $\sigma^*$ is an elementary $(d-k)$-cube. 
If we insist that $\sigma \not\in \partial V(I^{\mathbf{P}})$, then we see that $p_i \in \{1, \ldots, n_i \}$, and the degenerate coordinate values $(l_i+\tfrac{1}{2}) \in \{ \tfrac{1}{2}, \tfrac{3}{2} \ldots, (n_i + \tfrac{1}{2})\}$. 
Thus we obtain a bijection between $k$-cells in $V(I^{\mathbf{P}}) \setminus \partial V(I^{\mathbf{P}})$ and $(d-k)$-cells in $T(I)$. Mapping the $0$-cell $[\partial V(I^\mathbf{P})] \in V(I^\mathbf{P})/\partial V(I^\mathbf{P})$ to the $d$-cell attached to $\partial T(I)$ yields a dimension reversing bijection between all cells of $V(I^\mathbf{P}) / \partial V(I^\mathbf{P})$ and those of  $T(I) \sqcup_{\partial T(I)} \kappa^{(d)}$.

The next step is to confirm that the face relations between cells in $V(I^\mathbf{P})/\partial V(I^\mathbf{P})$ are mapped to coface relations in $T(I) \sqcup_{\partial T(I)} \kappa^{(d)}$. 
By the construction above, all interior face relations for $V(I^\mathbf{P})$ map to coface relations for $T(I)$. 
Given that only cells in the boundary belong to $[\partial V(I^\mathbf{P})]$, this correspondence is inherited by the quotient. 
    
Hence, the last detail we need to check is that the vertex $[\partial V(I^\mathbf{P})]$ in $V(I^\mathbf{P})/\partial V(I^\mathbf{P})$ has dual face relations to the $d$-cell $\kappa^{(d)}$ attached to the boundary of $T(I)$ in $T(I) \sqcup_{\partial T(I)} \kappa^{(d)}$. 
This is equivalent to the statement that $$ [\partial V(I^\mathbf{P})] \preceq \sigma \text{ in } V(I^\mathbf{P})/\partial V(I^\mathbf{P}) \Leftrightarrow \sigma^* \preceq \kappa^{(d)} \text{ in } T \sqcup_{\partial T} \kappa^{(d)}.$$ 
Now, $\sigma^*$ is a face of $\kappa^{(d)}$ if and only if $\sigma^* \in \partial T(I)$, which means at least one of the degenerate elementary intervals of $\sigma^*$ has $l_i+\tfrac{1}{2} = \tfrac{1}{2}$ or $( n_i + \tfrac{1}{2})$.  This makes $l_i = 0$ or $n_i$, so the corresponding elementary interval in the dual cell $\sigma$ is $e_i = [l_i, l_i+1] = [0,1]$ or $[n_i, n_i+1]$. This forces $\sigma \cap \partial V(I^\mathbf{P}) \neq \emptyset$, so that $[\partial V(I^\mathbf{P})] \preceq \sigma$. The converse implication follows in the same manner, and we are done.
\end{proof}

\begin{lemma}
    Given a rectangular digital image domain, $I$, the quotient, $T(I^{\mathbf{P}})/\partial T(I^{\mathbf{P}})$, of the padded T-construction modulo its boundary is the combinatorial dual of $V(I^\mathbf{P}) \sqcup_{\partial V(I^\mathbf{P})} \kappa^{(d)}$.
\end{lemma}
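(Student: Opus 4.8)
The plan is to mirror the proof of the previous lemma with the roles of the T- and V-constructions interchanged, while tracking the index ranges carefully, since the two statements are \emph{not} symmetric: here the padded T-construction is dualised against the \emph{padded} V-construction $V(I^\mathbf{P})$ rather than against the unpadded one, so I cannot simply appeal to symmetry or quote the first lemma. I would begin by writing an elementary $k$-cube $\tau \in T(I^\mathbf{P})$ as $\tau = e_1 \times \ldots \times e_d$, with $k$ non-degenerate intervals $e_i = [l_i, l_i+1]$ where $l_i \in \{0, \ldots, n_i+1\}$, and $d-k$ degenerate intervals $e_i = [p_i, p_i]$ where $p_i \in \{0, \ldots, n_i+2\}$, and record that $\tau \in \partial T(I^\mathbf{P})$ exactly when some degenerate interval has $p_i = 0$ or $p_i = n_i+2$.

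Next I would define the dual cell by the same half-integer shift used before, sending each non-degenerate $[l_i, l_i+1]$ to the degenerate $[l_i + \tfrac{1}{2}, l_i + \tfrac{1}{2}]$ and each degenerate $[p_i, p_i]$ to the non-degenerate $[p_i - \tfrac{1}{2}, p_i + \tfrac{1}{2}]$, so that $\tau^*$ is an elementary $(d-k)$-cube. The crucial book-keeping step is to check that, as $\tau$ ranges over the \emph{interior} cells of $T(I^\mathbf{P})$ (those with every degenerate $p_i \in \{1, \ldots, n_i+1\}$), the cell $\tau^*$ ranges over \emph{all} cells of $V(I^\mathbf{P})$, boundary included, under the natural identification by a global half-integer shift. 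This is exactly where the asymmetry with the first lemma appears, and it is what forces the padded V-construction onto the dual side. I would then send the collapsed boundary vertex $[\partial T(I^\mathbf{P})]$ to $\kappa^{(d)}$, obtaining a dimension-reversing bijection between the cells of $T(I^\mathbf{P})/\partial T(I^\mathbf{P})$ and those of $V(I^\mathbf{P}) \sqcup_{\partial V(I^\mathbf{P})} \kappa^{(d)}$.

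For face reversal, the interior-to-interior relations are handled exactly as in the previous lemma by the duality of elementary cubes, and they descend to the quotient because the cells collapsed to $[\partial T(I^\mathbf{P})]$ are precisely the boundary cells. The only genuinely new verification concerns the two special cells: I would show that $[\partial T(I^\mathbf{P})] \preceq \tau$ in the quotient iff $\tau$ has a vertex on $\partial T(I^\mathbf{P})$, iff some non-degenerate interval of $\tau$ equals $[0,1]$ or $[n_i+1, n_i+2]$, iff the corresponding degenerate coordinate of $\tau^*$ equals $\tfrac{1}{2}$ or $n_i + \tfrac{3}{2}$, iff $\tau^* \in \partial V(I^\mathbf{P})$, iff $\tau^* \preceq \kappa^{(d)}$. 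Since $[\partial T(I^\mathbf{P})]^* = \kappa^{(d)}$, this is the required face reversal for the special cells, and the converse directions run in the same manner.

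The main obstacle, as flagged above, is resolving this asymmetry rather than any hard new idea: the interior of the padded T-construction exhausts the whole of $V(I^\mathbf{P})$ (interior and boundary), whereas in the first lemma the interior of the padded V-construction only filled the unpadded $T(I)$. Getting the index ranges right — in particular verifying that the boundary of $V(I^\mathbf{P})$ is hit precisely by those interior cells of $T(I^\mathbf{P})$ that touch $\partial T(I^\mathbf{P})$ — is the delicate part, while the remaining face-relation checks reduce to the same elementary-cube duality computation already carried out.
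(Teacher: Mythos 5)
Your proposal is correct and takes essentially the same route as the paper, which proves this lemma simply by invoking the previous one with the roles of T and V reversed and remarking that the V-construction must be padded because $T(I)$ has more cells than $V(I)$. Your careful tracking of the index ranges --- showing that the interior cells of $T(I^\mathbf{P})$ exhaust all of $V(I^\mathbf{P})$ rather than just an unpadded copy --- is exactly the bookkeeping the paper leaves implicit, and it checks out.
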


\begin{proof}
    This follows from the same arguments as the previous lemma with the roles of T and V reversed. Note that we pad the V-construction before attaching the cell $\kappa^{(d)}$ to account for the fact that $T(I)$ naturally has more cells than $V(I)$.
\end{proof}

We have described how the two cubical complex models can be augmented to form dual cell complexes of the $d$-sphere. 
We now show how to obtain dual filtered cell complexes by comparing the 
image function  
on one construction with its negative on the other. 
The details are made precise in the lemmata below.

First note that the function $V(-\mathcal{I}^\mathbf{P})$ is constant on $\partial V(I^\mathbf{P})$ so it induces a function on the quotient space,  $\widetilde{V}(-\mathcal{I}^\mathbf{P}): V(I^\mathbf{P})/\partial V(I^\mathbf{P}) \to \R \text{ with } \widetilde{V}(-\mathcal{I}^\mathbf{P})([\partial V(I^\mathbf{P})]) = -N$ and agreeing with $V(-\mathcal{I}^\mathbf{P})$ on all other cells. 
Similarly, the function $T(\mathcal{I})$ extends to a function $\widehat{T}(\mathcal{I})$ on $T(I) \sqcup_{\partial T(I)} \kappa^{(d)}$ with $\widehat{T}(\mathcal{I})(\kappa^{(d)}) = N$. 

\begin{lemma} \label{dual_filtration_lemma_1}
    For each $\sigma \in T(I) \sqcup_{\partial T(I)} \kappa^{(d)}$ and dual cell $\sigma^* \in V(I^\mathbf{P})/\partial V(I^\mathbf{P})$ we have
    $$ -\widehat{T}(\mathcal{I})(\sigma) = \widetilde{V}(-\mathcal{I}^\mathbf{P})(\sigma^*).$$
\end{lemma}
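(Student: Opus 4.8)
The plan is to verify the identity cell by cell, splitting $T(I) \sqcup_{\partial T(I)} \kappa^{(d)}$ into the single attached cell $\kappa^{(d)}$ and the genuine cells $\sigma \in T(I)$. The attached cell is immediate: its dual is the collapsed vertex $[\partial V(I^\mathbf{P})]$, and the definitions of the two extended functions give $\widehat{T}(\mathcal{I})(\kappa^{(d)}) = N$ while $\widetilde{V}(-\mathcal{I}^\mathbf{P})([\partial V(I^\mathbf{P})]) = -N$, so the two sides agree. It then remains to treat a cell $\sigma \in T(I)$, whose dual $\sigma^*$ is an interior cell of $V(I^\mathbf{P})$.

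For such a $\sigma$ I would expand both sides using the defining formulas of the two constructions. On the left, $-\widehat{T}(\mathcal{I})(\sigma) = -T(\mathcal{I})(\sigma) = -\min_{\sigma \preceq \tau^{(d)}} \mathcal{I}(p_\tau)$, the minimum taken over the top-dimensional cofaces $\tau^{(d)}$ of $\sigma$ in $T(I)$. On the right, using $\max(-x) = -\min(x)$, the V-construction formula yields $\widetilde{V}(-\mathcal{I}^\mathbf{P})(\sigma^*) = V(-\mathcal{I}^\mathbf{P})(\sigma^*) = -\min_{\upsilon^{(0)} \preceq \sigma^*} \mathcal{I}^\mathbf{P}(p_\upsilon)$, the minimum over the vertices $\upsilon^{(0)}$ of $\sigma^*$ in $V(I^\mathbf{P})$. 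The combinatorial duality between $T(I)\sqcup_{\partial T(I)}\kappa^{(d)}$ and $V(I^\mathbf{P})/\partial V(I^\mathbf{P})$ established above, in particular Face Reversal $\sigma \preceq \tau \iff \tau^* \preceq \sigma^*$, supplies a bijection between the top-cell cofaces $\tau^{(d)}$ of $\sigma$ and the interior vertices $\upsilon^{(0)} = (\tau^{(d)})^*$ that are faces of $\sigma^*$; moreover the explicit coordinate bijection matches indices, so a top cell indexed by $p$ is dual to the interior vertex indexed by the same $p$, and the two carry the common value $\mathcal{I}(p) = \mathcal{I}^\mathbf{P}(p)$. Matching the two minima term by term then gives the claim, provided the minimum on the right is actually attained on interior vertices.

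This last proviso is where the only real care is needed, and I expect it to be the main obstacle. The cell $\sigma^*$ may also possess vertices lying on $\partial V(I^\mathbf{P})$, the padding vertices, which are not dual to any top cell of $T(I)$ but collapse to $[\partial V(I^\mathbf{P})]$ in the quotient; such vertices contribute the value $\mathcal{I}^\mathbf{P} = N$ to the right-hand minimum. To dispose of them I would invoke two facts: first, $N > \max_{p \in I} \mathcal{I}(p)$, so every padding vertex strictly exceeds every interior value; second, every cell of the cubical complex $T(I)$ is a face of at least one top-dimensional cube, so $\sigma$ always admits a top-cell coface and hence $\sigma^*$ always has at least one interior vertex. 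Together these ensure the padding vertices never attain the minimum, so the right-hand minimum equals the minimum over interior vertices alone, matching the left-hand side. Equivalently, one may argue directly in the quotient: the collapsed vertex $[\partial V(I^\mathbf{P})]$ carries the value $-N$, the minimal value of $-\mathcal{I}^\mathbf{P}$, and so cannot affect the maximum defining $\widetilde{V}(-\mathcal{I}^\mathbf{P})(\sigma^*)$.
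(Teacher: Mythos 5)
Your proof is correct and follows essentially the same route as the paper's: a case split between $\kappa^{(d)}$ (dual to the collapsed vertex, both sides $-N$) and cells of $T(I)$, where unfolding the min/max definitions and using face reversal matches top-cell cofaces with interior vertices. The only difference is organizational --- the paper treats $\dim\sigma = d$ as a separate (trivial) case and absorbs the padding issue into the observation that the collapsed vertex carries the minimal value $-N$, whereas you make the ``padding vertices never attain the extremum'' point explicit; both handle the same subtlety.
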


\begin{proof}
    Firstly, suppose $\dim \sigma = d$ and $\sigma \neq \kappa^{(d)}$, the $d$-cell attached to the boundary. Suppose $p \in I$ is the corresponding element of the image domain, so that $\mathcal{I}(p) = \widehat{T}(\mathcal{I})(\sigma)$. The dual cell $\sigma^* \in V(I^\mathbf{P})/\partial V(I^\mathbf{P})$ corresponds to the same voxel but is given the negative value
    $$ \widetilde{V}(-\mathcal{I}^\mathbf{P})(\sigma^*) = -\mathcal{I}(p) = -\widehat{T}(\mathcal{I})(\sigma).$$ 
    For the remaining $d$-cell $\kappa^{(d)}$, with dual $[\partial V(I^\mathbf{P})]^*$, the function values satisfy  $$-\widehat{T}(\mathcal{I})(\kappa^{(d)}) = -N = \widetilde{V}(-\mathcal{I}^\mathbf{P})([\partial V(I^\mathbf{P})]).$$
    Lastly, suppose $\sigma \in T(I) \sqcup_{\partial T(I)} \kappa^{(d)}$ and $\dim \sigma < d$. By construction, it follows that
    $$-\widehat{T}(\mathcal{I})(\sigma) = -\min_{\tau^{(d)} \succeq \sigma} \widehat{T}(\mathcal{I})(\tau^{(d)}) = \max_{\tau^{(d)} \succeq \sigma}  - \widehat{T}(\mathcal{I})(\tau^{(d)}) = \max_{\upsilon^{(0)} \preceq \sigma^*}  \widetilde{V}(-\mathcal{I}^\mathbf{P})(\upsilon^{(0)}) = \widetilde{V}(-\mathcal{I}^\mathbf{P})(\sigma^*)$$
    as required.
\end{proof}
Now define the functions $\widetilde{T}(-\mathcal{I}^\mathbf{P})$ on $T(I^\mathbf{P})/\partial T(I^\mathbf{P})$ and  $\widehat{V}(\mathcal{I}^\mathbf{P})$ on $V(I^\mathbf{P}) \sqcup_{\partial V(I^\mathbf{P})} \kappa^{(d)}$ similarly to those above. 
\begin{lemma} \label{dual_filtration_lemma_2}
    For each $\sigma \in V(I^\mathbf{P}) \sqcup_{\partial V(I^\mathbf{P})} \kappa^{(d)}$ and dual cell $\sigma^* \in T(I^\mathbf{P})/\partial T(I^\mathbf{P})$ we have
    $$ -\widehat{V}(\mathcal{I}^\mathbf{P})(\sigma) = \widetilde{T}(-\mathcal{I}^\mathbf{P})(\sigma^*).$$
\end{lemma}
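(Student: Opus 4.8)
The plan is to recognise this statement as the exact mirror image of Lemma~\ref{dual_filtration_lemma_1} under the involution that interchanges the roles of the T- and V-constructions and replaces the base image $\mathcal{I}$ by the padded image $\mathcal{I}^\mathbf{P}$, and then to re-run that proof in the swapped setting. The combinatorial duality between $V(I^\mathbf{P}) \sqcup_{\partial V(I^\mathbf{P})} \kappa^{(d)}$ and $T(I^\mathbf{P})/\partial T(I^\mathbf{P})$ is already supplied by the combinatorial-duality lemma for this pair proved just above, so the dimension-reversing bijection $\sigma \mapsto \sigma^*$ and the face reversal $\upsilon^{(0)} \preceq \sigma \iff \sigma^* \preceq (\upsilon^{(0)})^*$ are in hand. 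First I would fix the conventions for the two extended functions exactly as in the previous paragraph: $\widehat{V}(\mathcal{I}^\mathbf{P})$ agrees with $V(\mathcal{I}^\mathbf{P})$ on the cells of $V(I^\mathbf{P})$ and takes the value $N$ on the attached cell $\kappa^{(d)}$, while $\widetilde{T}(-\mathcal{I}^\mathbf{P})$ is the function induced on the quotient, taking the value $-N$ on the collapsed vertex $[\partial T(I^\mathbf{P})]$ and agreeing with $T(-\mathcal{I}^\mathbf{P})$ on all other cells.

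I would then verify the identity $-\widehat{V}(\mathcal{I}^\mathbf{P})(\sigma) = \widetilde{T}(-\mathcal{I}^\mathbf{P})(\sigma^*)$ by the same three cases, but with the primitive case now occurring in dimension $0$, since here it is the V-construction (rather than the T-construction) that assigns values directly. When $\dim\sigma = 0$, a vertex $\upsilon^{(0)}$ corresponding to a voxel $p \in I^\mathbf{P}$ has $\widehat{V}(\mathcal{I}^\mathbf{P})(\upsilon^{(0)}) = \mathcal{I}^\mathbf{P}(p)$; its dual $(\upsilon^{(0)})^*$ is the top $d$-cell of $T(I^\mathbf{P})$ corresponding to the same voxel, which carries the value $-\mathcal{I}^\mathbf{P}(p)$, so the identity holds directly. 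The boundary case $\sigma = \kappa^{(d)}$, dual to $[\partial T(I^\mathbf{P})]$, is settled by the chosen endpoint values, since $-\widehat{V}(\mathcal{I}^\mathbf{P})(\kappa^{(d)}) = -N = \widetilde{T}(-\mathcal{I}^\mathbf{P})([\partial T(I^\mathbf{P})])$.

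For the remaining case $0 < \dim\sigma$ with $\sigma \neq \kappa^{(d)}$, I would expand $\widehat{V}(\mathcal{I}^\mathbf{P})(\sigma) = \max_{\upsilon^{(0)} \preceq \sigma} \widehat{V}(\mathcal{I}^\mathbf{P})(\upsilon^{(0)})$, push the negation through the maximum to turn it into a minimum, apply the $\dim = 0$ case cell by cell to replace $-\widehat{V}(\mathcal{I}^\mathbf{P})(\upsilon^{(0)})$ by $\widetilde{T}(-\mathcal{I}^\mathbf{P})\big((\upsilon^{(0)})^*\big)$, and then use face reversal to rewrite the index set $\{\upsilon^{(0)} \preceq \sigma\}$ as $\{\tau^{(d)} \succeq \sigma^*\}$. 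The resulting minimum over the cofaces of $\sigma^*$ is exactly $\widetilde{T}(-\mathcal{I}^\mathbf{P})(\sigma^*)$ by the definition of the T-construction, which closes this case and mirrors the corresponding computation in Lemma~\ref{dual_filtration_lemma_1}.

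The thing to watch, rather than any deep obstacle, is the bookkeeping forced by the padding asymmetry already flagged in the preceding lemma: here \emph{both} constructions live over $I^\mathbf{P}$, whereas in Lemma~\ref{dual_filtration_lemma_1} the T-construction was over $I$. I would therefore need to confirm that the voxel-indexing of the vertices of $V(I^\mathbf{P})$ and of the top-cells of $T(I^\mathbf{P})$ genuinely agrees under the dual bijection, and that the attach/quotient operations interact correctly with the boundary cells so that the three cases are exhaustive and the endpoint value $\pm N$ is assigned to exactly the one extra cell on each side.
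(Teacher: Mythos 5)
Your proposal is correct and takes essentially the same approach as the paper: the paper's entire proof of this lemma is the one-line remark that it is ``similar to Lemma~\ref{dual_filtration_lemma_1} with the roles of V and T interchanged,'' and your write-up simply carries out that swap explicitly, with the primitive case moving from the top-dimensional cells to the vertices. The bookkeeping issues you flag (both constructions now living over $I^\mathbf{P}$, the voxel-indexing of vertices of $V(I^\mathbf{P})$ versus top-cells of $T(I^\mathbf{P})$) are exactly the right things to verify and all go through.
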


\begin{proof}
    Similar to Lemma \ref{dual_filtration_lemma_1} with the roles of V and T interchanged.
\end{proof}
\begin{corollary}
\label{cor:images_dual_filtrations}
For a grayscale digital image $\mathcal{I} : I \to \R$
\begin{enumerate}
    \item The filtered complexes $(T(I) \sqcup_{\partial T(I)} \kappa^{(d)},\widehat{T}(\mathcal{I}))$ and $(V(I^\mathbf{P})/\partial V(I^\mathbf{P}),\widetilde{V}(-\mathcal{I}^\mathbf{P}))$ are dual.
    \item The filtered complexes $(V(I^\mathbf{P}) \sqcup_{\partial V(I^\mathbf{P})} \kappa^{(d)},\widehat{V}(\mathcal{I}^\mathbf{P}))$ and $(T(I^\mathbf{P})/\partial T(I^\mathbf{P}),\widetilde{T}(-\mathcal{I}^\mathbf{P}))$ are dual.
\end{enumerate}
    
\end{corollary}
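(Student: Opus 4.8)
The plan is to deduce both statements directly from Proposition~\ref{negative_function_lemma}, which upgrades a combinatorial duality of cell complexes to a duality of \emph{filtered} complexes as soon as the two filtration functions are related by a sign flip under the duality bijection. Every ingredient this requires has already been assembled: the two preceding lemmas supply the combinatorial duality of the underlying complexes (identifying $V(I^\mathbf{P})/\partial V(I^\mathbf{P})$ as the dual of $T(I)\sqcup_{\partial T(I)}\kappa^{(d)}$, and $T(I^\mathbf{P})/\partial T(I^\mathbf{P})$ as the dual of $V(I^\mathbf{P})\sqcup_{\partial V(I^\mathbf{P})}\kappa^{(d)}$), while Lemmas~\ref{dual_filtration_lemma_1} and~\ref{dual_filtration_lemma_2} supply exactly the sign relation $f^*(\sigma^*)=-f(\sigma)$ that Proposition~\ref{negative_function_lemma} demands.

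For Part~(1) I would set $X=T(I)\sqcup_{\partial T(I)}\kappa^{(d)}$ with $f=\widehat{T}(\mathcal{I})$, and $X^*=V(I^\mathbf{P})/\partial V(I^\mathbf{P})$ with $f^*=\widetilde{V}(-\mathcal{I}^\mathbf{P})$. The combinatorial-duality lemma identifies $X^*$ as the dual of $X$ under an explicit dimension- and face-reversing bijection $\sigma\mapsto\sigma^*$, and Lemma~\ref{dual_filtration_lemma_1} states precisely that $\widetilde{V}(-\mathcal{I}^\mathbf{P})(\sigma^*)=-\widehat{T}(\mathcal{I})(\sigma)$ for this same bijection. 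This is the hypothesis of Proposition~\ref{negative_function_lemma}, so the proposition immediately yields that $(X,f)$ and $(X^*,f^*)$ are dual filtered complexes. Part~(2) follows identically after interchanging the roles of $T$ and $V$: I would invoke the corresponding combinatorial-duality lemma together with Lemma~\ref{dual_filtration_lemma_2} for the sign relation, and then apply Proposition~\ref{negative_function_lemma} again.

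The only point that deserves a moment of care is to confirm that the extended and quotient functions are genuinely monotonic, so that $(X,f)$ and $(X^*,f^*)$ qualify as filtered complexes to begin with. This is immediate from the construction: $\widehat{T}(\mathcal{I})$ restricts to the monotonic function $T(\mathcal{I})$ on $T(I)$ and assigns the single new top-cell $\kappa^{(d)}$ the value $N$, which dominates every image value by the choice $N>\max_{p}\mathcal{I}(p)$; dually, $\widetilde{V}(-\mathcal{I}^\mathbf{P})$ sends the collapsed boundary vertex to the global minimum $-N$. In fact, once $f$ is monotonic, monotonicity of $f^*$ is automatic, since face reversal together with the sign flip turns the defining inequality for $f$ into that for $f^*$. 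With monotonicity in hand there is no further obstacle; the corollary is essentially a repackaging of the preceding lemmas through Proposition~\ref{negative_function_lemma}, and I do not anticipate any genuinely hard step.
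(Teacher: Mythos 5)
Your proposal is correct and matches the paper's own argument, which likewise combines the combinatorial duality lemmas with Lemmas~\ref{dual_filtration_lemma_1} and~\ref{dual_filtration_lemma_2} and then invokes Proposition~\ref{negative_function_lemma}. The extra check of monotonicity you include is a reasonable (if routine) addition that the paper leaves implicit.
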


\begin{proof}
    This follows directly from applying Lemma \ref{dual_filtration_lemma_1} for part (1) and Lemma \ref{dual_filtration_lemma_2} for part (2), then Proposition 
    \ref{negative_function_lemma}.
\end{proof}

\section{Persistence Diagrams of the Modified Filtrations}
\label{sec:technical}

In the previous section, we showed that the T- and V-constructions built from an image can be modified via padding, cell attachment, and taking quotients to become dual cell complexes of the $d$-sphere. Our next step is to examine the effect of such operations on the persistence module and diagram. 

As in Section \ref{sec:grayscale_digital_image_constructions}, suppose $\mathcal{I} : I \to \R$ is a grayscale digital image and  $N > \max \mathcal{I}$. The specific operations we study are
\begin{enumerate} 
    \item Padding an image $\mathcal{I} : I \to \R$ with a outer shell of $N$-valued pixels, then forming the V- and T-constructions. 
    \item Attaching a $d$-cell to the boundary of $V(I^{\mathbf{P}})$ or to $T(I)$ with value $N$.
    \item Taking the quotient modulo the boundary in the negative padded filtration, \emph{i.e.}\ changing from ${V}(-\mathcal{I}^\mathbf{P})$ to $\widetilde{V}(-\mathcal{I}^\mathbf{P})$ and from ${T}(-\mathcal{I}^\mathbf{P})$ to $\widetilde{T}(-\mathcal{I}^\mathbf{P})$. 
\end{enumerate}
Of these, the first two have relatively transparent effects on the persistent homology of the filtered spaces.  Padding the image as in (1) does not change the persistence diagrams; attaching a $d$-cell as in (2) simply creates an essential $d$-cycle with birth at $N$.  We summarise these formally as follows.

\begin{proposition} \label{dcell_padding_proposition}
    For a grayscale digital image $\mathcal{I} : I \to \R$ 
    \begin{enumerate}
    \item $\mathsf{Dgm}(V(\mathcal{I}^\mathbf{P})) =\mathsf{Dgm}(V(\mathcal{I}))$ and $\mathsf{Dgm}(T(\mathcal{I}^\mathbf{P})) =\mathsf{Dgm}(T(\mathcal{I}))$
    \item $$\mathsf{Dgm}(\widehat{V}(\mathcal{I})) = \mathsf{Dgm}(V(\mathcal{I})) \cup \Set{ [N, \infty)_d}$$ and $$\mathsf{Dgm}(\widehat{T}(\mathcal{I})) = \mathsf{Dgm}(T(\mathcal{I})) \cup \Set{ [N, \infty)_d}$$
    \end{enumerate}
\end{proposition}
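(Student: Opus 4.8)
The plan is to treat the two parts separately but through a common mechanism: in every case the newly introduced cells all receive filtration value $N$, which is strictly larger than every value taken on the original complex, so the modified filtration agrees with the original below $N$ and differs only in a single final step.

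For Part~(1) I would first establish the combinatorial fact that padding introduces only cells of value $N$. In the V-construction, any cell of $V(I^\mathbf{P})$ not already in $V(I)$ must contain a padding vertex, and since $V(\mathcal{I}^\mathbf{P})$ takes the maximum over vertices and every padding vertex has value $N > \max\mathcal{I}$, such a cell gets value $N$, while cells of $V(I)$ keep their old values. Dually, a cell of $T(I^\mathbf{P})$ not in $T(I)$ is a face only of padding voxels, so the defining minimum of $T(\mathcal{I}^\mathbf{P})$ equals $N$, whereas boundary cells of $T(I)$ are unchanged because adding $N$-valued voxels never lowers an existing minimum. Consequently, for every $r < N$ the sub-level set of the padded construction coincides exactly with that of the original, so the two persistence modules are equal as functors on $(-\infty, N)$. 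It then remains to compare them across the single value $N$: both $V(I)$ and $V(I^\mathbf{P})$ (resp.\ $T(I)$ and $T(I^\mathbf{P})$) are contractible boxes, and the inclusion of the unpadded into the padded complex induces an isomorphism on homology in every degree. I would package this as an explicit isomorphism of persistence modules, the identity below $N$ and the inclusion-induced isomorphism above, from which equality of the diagrams follows.

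For Part~(2) I would argue that attaching a single $d$-cell $\kappa^{(d)}$ of value $N$ adds exactly one essential class of degree $d$. Again the sub-level sets agree with those of $V(\mathcal{I})$ for $r < N$, so the diagrams can only differ through the last step, where $\kappa^{(d)}$ enters alone at value $N$. By the incremental nature of the computation~\cite{delfinado1995incremental}, this single $d$-cell is either a birth or a death cell, and it is a birth cell precisely when its boundary $\partial\kappa^{(d)}$ — the $(d-1)$-cycle carried by $\partial V(I)$ — is null-homologous in the complex present just before it is added, namely $V(I)$. Since $V(I) \cong D^d$ is contractible, that cycle bounds, so $\kappa^{(d)}$ is a birth cell creating a $d$-dimensional class; being the final cell in the ordering it is never paired, hence essential, contributing exactly $[N,\infty)_d$. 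No other interval is affected: in all degrees $k \neq d$ the inclusion $V(I) \hookrightarrow V(I)\sqcup_{\partial}\kappa^{(d)} \cong S^d$ induces an isomorphism, and $\mathsf{Dgm}^d(V(\mathcal{I}))$ is empty because any sub-level set, being a finite cubical subcomplex of $\R^d$, has vanishing top homology. The identical argument with the roles of V and T interchanged handles $\widehat{T}(\mathcal{I})$.

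The routine bookkeeping — verifying that padding cells really get value $N$ and that the relevant boxes are contractible — is straightforward. The main obstacle I anticipate is making the comparison across the single value $N$ rigorous at the level of persistence modules rather than merely of spaces: I must argue that agreement of the filtrations on $(-\infty, N)$ together with an isomorphism on total homology across $N$ genuinely forces equality of the whole diagram, ruling out spurious intervals born or dying exactly at $N$. For Part~(1) this amounts to exhibiting the module isomorphism explicitly and invoking that isomorphic persistence modules have identical diagrams; for Part~(2) it amounts to pinning down that the new class is essential and that nothing already present dies at $N$, which is exactly where the injectivity of the inclusion-induced map and the incremental birth/death dichotomy do the real work.
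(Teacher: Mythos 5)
Your proposal is correct and follows exactly the reasoning the paper relies on: the paper states this proposition without a formal proof, justifying it only by the observation that padding leaves all sub-level sets below $N$ unchanged and that the attached $d$-cell, entering last with null-homologous attaching sphere in the contractible complex, creates a single essential class $[N,\infty)_d$. Your write-up is a careful and accurate elaboration of precisely that argument, including the two points the paper leaves implicit (the inclusion of the unpadded into the padded box being a homology isomorphism, and the emptiness of $\mathsf{Dgm}^d$ before the cell attachment).
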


The remaining operation to investigate is the third, namely the effect of taking the quotient modulo the boundary. For this we need some machinery we will now introduce.

\subsection{Long Exact Sequence of a Filtered Pair}

To examine the effect of taking quotients on persistence diagrams, we use the description of persistence modules as functors together with a long exact sequence (LES) for these. Given a pair of cell complexes $(X,A)$ with $A \subseteq X$, we obtain a short exact sequence (SES) of cellular chain complexes inducing the LES
$$\ldots \to H_k(A) \xrightarrow{i}  H_k(X) \xrightarrow{p} H_k(X,A) \xrightarrow{\delta} H_{k-1}(A) \to \ldots $$
which is a standard tool for analysing the homology of the pair, where $H_k(X,A)$ denotes the relative homology of the pair.

Similarly, suppose we have a filtered cell complex $(X,f)$ and a sub-complex $A \subseteq X$. Then the restriction $f|_A : A \to \R$ induces a filtered sub-complex $(A, f|_A)$ and, at each index $r \in \mathbb R$, we obtain a pair $(X_r,A_r)$, where $X_r = f^{-1}(-\infty, r]$ and $A_r = f|_A^{-1}(-\infty,r]$. Theorem 4.5 in \cite{varl2018homological} states that the $\R$-indexed collection of SESs of cellular chain complexes
$$0 \to C_*(A_r) \to C_*(X_r) \to C_*(X_r,A_r) \to 0$$
yields a LES of persistence modules:

\begin{theorem}[Long Exact Sequence for Relative Persistence Modules] \label{LES4PM}
   Given a monotonic function $f : X \to \R$ and a sub-complex $A \subseteq X$, there is a long exact sequence of persistence modules 
   $$\ldots \to H_k(f|_A) \to H_k(f) \to H_k(f, f|_A) \to H_{k-1}(f |_A) \to H_{k-1}(f) \to \ldots
   $$
   where $H_k(f, f|_A)$ denotes the persistence module given by $r \mapsto H_k(X_r,A_r)$.
\end{theorem}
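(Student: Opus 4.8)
The plan is to exploit the fact that persistence modules form an abelian category and then invoke the standard homological algebra that produces a long exact sequence from a short exact sequence of chain complexes in any abelian category. Concretely, a persistence module is an object of the functor category $\mathsf{Vec}_{\Z/2\Z}^{(\R, \leq)}$; since $\mathsf{Vec}_{\Z/2\Z}$ is abelian and functor categories valued in an abelian category are again abelian, with kernels, cokernels, images and exactness all computed pointwise, this category is abelian. This is the structural fact that makes the whole argument go through, and it is the viewpoint already flagged in the excerpt (``the category theoretic view of persistence modules'').

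First I would upgrade the cellular chain groups to persistence modules. For each degree $k$, the assignment $r \mapsto C_k(X_r)$, with structure maps given by the inclusions $C_k(X_r) \hookrightarrow C_k(X_s)$ for $r \leq s$, is a functor $(\R, \leq) \to \mathsf{Vec}_{\Z/2\Z}$, hence a persistence module, which I would call $C_k(f)$. Likewise define $C_k(f|_A)$ and the relative module $C_k(f, f|_A)$ via $r \mapsto C_k(X_r, A_r)$. The cellular boundary maps commute with the inclusions of sub-level sets, so each $\partial_k$ is a natural transformation, and $(C_*(f), \partial)$, $(C_*(f|_A), \partial)$ and $(C_*(f, f|_A), \partial)$ are genuine chain complexes in the abelian category of persistence modules.

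Next I would assemble these into a short exact sequence of chain complexes of persistence modules
$$0 \to C_*(f|_A) \to C_*(f) \to C_*(f, f|_A) \to 0.$$
Exactness is verified pointwise: at each fixed $r$ this is exactly the short exact sequence $0 \to C_*(A_r) \to C_*(X_r) \to C_*(X_r,A_r) \to 0$ of the pair $(X_r, A_r)$, i.e. the $\R$-indexed family supplied in the hypothesis. Because exactness in the functor category is pointwise, the sequence of persistence modules is exact. Applying the usual construction of the connecting homomorphism and the associated long exact sequence for a short exact sequence of chain complexes in an abelian category then yields the long exact sequence of homology objects; these objects are precisely the persistence modules $H_k(f|_A)$, $H_k(f)$ and $H_k(f, f|_A)$, since homology is computed degreewise and pointwise, so that the value at $r$ is $H_k(X_r, A_r)$ and so on.

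The main obstacle is the naturality of the connecting homomorphism $\delta : H_k(f, f|_A) \to H_{k-1}(f|_A)$, namely checking that it is a morphism of persistence modules rather than merely an unrelated family of linear maps $\delta_r$. In the abelian-category framework this is automatic, but it ultimately rests on the naturality of the snake-lemma connecting map with respect to morphisms of short exact sequences: for each $r \leq s$ the inclusions of sub-level sets give a commuting ladder between the short exact sequence at level $r$ and that at level $s$, and the snake lemma guarantees that $\delta_r$ and $\delta_s$ commute with the vertical structure maps. The maps $i$ and $p$ are morphisms of persistence modules for the more elementary reason that they are induced by chain maps already compatible with the inclusions. I would therefore either cite the abelian-category long exact sequence directly, as in \cite{varl2018homological}, or, for a self-contained argument, verify this single commuting square explicitly.
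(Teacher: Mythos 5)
Your proposal is correct and follows essentially the same route as the paper, which does not give a self-contained proof but cites Theorem 4.5 of \cite{varl2018homological} and notes that the result follows from the fact that persistence modules form an abelian category in which the snake lemma holds --- exactly the framework you spell out. Your explicit treatment of pointwise exactness and of the naturality of the connecting homomorphism fills in the details the paper leaves to the references, but introduces no new idea beyond the paper's stated approach.
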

Here the persistence modules are functors from the poset category $(\R, \leq)$ to  $\mathsf{Vec}_{\Z/2\Z}$ and the morphisms are natural transformations obtained from the standard connecting homomorphisms and the linear transformations induced by inclusions and projections at each filtration index. The kernels and cokernels of the morphisms are themselves persistence modules defined by taking the kernel or cokernel at each filtration index, with maps between corresponding vector spaces at different filtration indices  induced by inclusions. This result is implicit in the recent work in \cite{Bubenik2021, miller2020homological}, where it follows as corollary of the fact that persistence modules form an abelian category whereby the snake lemma holds.

\subsection{Persistence of the Image with Boundary Identified} \label{applications_to_images}
The remaining operation to investigate is the effect of taking the quotient of a padded image modulo the boundary filtered with the negative of the image function. We state the result in terms of a  space $X$ homeomorphic to the $d$-dimensional closed disc $D^d$, filtered by a function $f : X \to \R$ taking a constant minimal value, $\min f = -N$, on the boundary of $X$ so that Lemma~\ref{quotient_lemma} applies to both T- and V-constructions. 
Using the long exact sequence of a pair, we show that the $(d-1)$-cycle with interval $[-N,\max f)_{d-1}$ representing the boundary is removed while a $d$-cycle with interval $[\max f, \infty)_d$ is added.

\begin{lemma} \label{quotient_lemma}
    Take a monotonic function $f : X\cong D^d \to \R$ with $$\sigma \in \D X \Rightarrow f(\sigma) = -N = \min f $$ 
    and induced quotient map $\widetilde{f} : X/\partial X \to \R$. Then
    $$\mathsf{Dgm}(\widetilde{f}) = \big(\mathsf{Dgm}(f)\setminus \Set{ [-N, \max f)_{d-1} }\big) \cup \Set{ [\max f, \infty)_d }.$$
\end{lemma}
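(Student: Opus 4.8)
The plan is to identify the persistence module of $\widetilde{f}$ with the \emph{relative} persistence module of the filtered pair $(X,\partial X)$ and then read off its barcode from the long exact sequence of Theorem~\ref{LES4PM}, exploiting that $\partial X \cong S^{d-1}$ carries an essentially trivial filtration. Throughout I write $A = \partial X$ and use that $X \cong D^d$ is contractible.

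First I would record the two inputs. Because every boundary cell has the minimal value $-N$, the restricted filtration is trivial: $A_r = \emptyset$ for $r < -N$ and $A_r = A$ for $r \geq -N$. Hence $H_k(f|_A)$ is the interval module $\mathbb{I}_{[-N,\infty)}$ for $k \in \{0,d-1\}$ and $0$ otherwise (the homology of $S^{d-1}$). Since $(X_r,A)$ is a CW, hence good, pair and the quotient maps are natural in $r$, the good-pair isomorphism $\widetilde{H}_*(X_r/A) \cong H_*(X_r,A)$ (see \cite{hatcher2002}) assembles into an isomorphism of persistence modules between the \emph{reduced} homology of $\widetilde{f}$ and the relative module $H_*(f,f|_A)$; here I use $(X/\partial X)_r = X_r/A_r$. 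In positive degrees reduced and unreduced homology agree, while in degree $0$ they differ by one essential summand $\mathbb{I}_{[-N,\infty)}$ (the augmentation of the nonempty quotient). So it suffices to compute $\mathsf{Dgm}(H_k(f,f|_A))$ in each degree.

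Next I would feed the barcode of $H_*(f|_A)$ into the sequence, splitting the degrees into groups. In the interior degrees $1 \le k \le d-2$ one has $H_k(f|_A)=0$, and the connecting map $\delta$ into $H_{k-1}(f|_A)$ vanishes — automatically for $k\ge 2$, and for $k=1$ because $i_*\colon H_0(f|_A)\to H_0(f)$ is injective (the boundary sits in a single component of each $X_r$); hence $H_k(f)\cong H_k(f,f|_A)$ and these diagrams are unchanged. In degree $d$ I use that a top-dimensional cycle can never die yet $H_d(X)=0$, forcing $H_d(f)=0$, so $\delta$ embeds $H_d(f,f|_A)$ into $H_{d-1}(f|_A)=\mathbb{I}_{[-N,\infty)}$ with image $\Ker i_*$. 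Here I must show the boundary fundamental class dies exactly at $\max f$: over $\Z/2\Z$ the sum of all $d$-cells is the unique $d$-chain with boundary the fundamental cycle of $\partial X$ (unique because $H_d(X_r)=0$), and the last $d$-cell enters at $\max f$. This gives $\Ker i_* = \mathbb{I}_{[\max f,\infty)}$, hence $H_d(f,f|_A)=\mathbb{I}_{[\max f,\infty)}$ and the new interval $[\max f,\infty)_d$. In the removal degree $d-1$ the same computation yields $\im i_* = \mathbb{I}_{[-N,\max f)}$, and the subsequent map $\delta'\colon H_{d-1}(f,f|_A)\to H_{d-2}(f|_A)$ vanishes (its target is $0$ for $d\ge 3$, and for $d=2$ because the following $i_*$ on $H_0$ is injective), so $H_{d-1}(f,f|_A)\cong H_{d-1}(f)/\im i_*$. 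Degree $0$ is preserved because for $d\ge 2$ the set $A$ is connected and collapsing a connected subcomplex changes neither the components nor their merging, so $\mathsf{Dgm}^0(\widetilde{f})=\mathsf{Dgm}^0(f)$.

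The main obstacle is that a short exact sequence of persistence modules need not split, so I cannot subtract barcodes naively when forming $H_{d-1}(f)/\im i_*$. I would overcome this by showing $\im i_* \cong \mathbb{I}_{[-N,\max f)}$ is a genuine direct summand of $H_{d-1}(f)$. Concretely, $\im i_*$ is cyclic, generated at the global minimum $-N$ by the image $g$ of the boundary class; writing $g = \sum_{l\in S} e_l$ in a basis indexed by the intervals of $H_{d-1}(f)$ born at $-N$, the death of $g$ is $\max_{l\in S}\delta_l$, which the previous paragraph identifies with $\max f$. Fixing $l^*\in S$ with $\delta_{l^*}=\max f$, I would check pointwise that $\langle g\rangle$ and the sub-sum $\bigoplus_{l\neq l^*}\mathbb{I}_{[b_l,\delta_l)}$ are complementary submodules, so the quotient deletes exactly the interval $[-N,\max f)_{d-1}$. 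The identical summand argument (taken with $\delta_{l^*}=\infty$ in degree $0$) also settles the remaining low-dimensional bookkeeping, including the base case $d=1$, where $A\cong S^0$ is disconnected and the deleted class $[-N,\max f)_{d-1}$ lives in degree $0$; there one runs the relative sequence directly rather than invoking the connected-collapse shortcut.
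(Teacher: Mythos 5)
Your proof follows essentially the same route as the paper's: identify $\widetilde{H}_*(\widetilde{f})$ with the relative persistence module of the filtered pair $(X,\partial X)$ via the good-pair isomorphism, run the long exact sequence of Theorem~\ref{LES4PM}, and compute the kernels and cokernels of the maps induced by $\partial X \hookrightarrow X$, treating $d=1$ separately. The only differences are that you make explicit two details the paper leaves implicit --- the chain-level argument that the image of the boundary class dies exactly at $\max f$, and the direct-summand argument showing that quotienting by $\im\, i_* \cong \mathbb{I}_{[-N,\max f)}$ genuinely deletes a single bar rather than rearranging the barcode --- and both of these additions are correct.
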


\begin{proof}
    For pairs of cell complexes the relative homology groups are naturally isomorphic to the reduced homology groups $\tilde{H}_k(\tilde{f})$ of the quotient \cite[p.124]{hatcher2002}. Naturality implies that the result extends to persistence modules and that the reduced persistence modules differ only by the essential interval  $\mathbb{I}_{[-N, \infty)}$ in degree $0$. To compute the reduced persistence modules $\tilde{H}_k(\tilde{f})$ of the quotient, we therefore consider the LES of the filtered pair $(f, f|_{\D X})$
    $$
    \ldots \to H_{k}(f |_{\D X}) \xrightarrow{\alpha_{k}} H_{k}(f) \to H_{k}(f, f |_{\D X}) \to H_{k-1}(f|_{\D X}) \xrightarrow{\alpha_{k-1}} H_{k-1}(f) \to \ldots
    $$
    where $\alpha_k$ is the map induced by the inclusion $\D X \subseteq X$. Taking the cokernel of $\alpha_{k}$ and the kernel of $\alpha_{k-1}$ the LES yields the SES
    $$
    0 \to \mathsf{Coker}(\alpha_{k}) \to H_{k}(f, f|_{\D X}) \to \mathsf{Ker}(\alpha_{k-1}) \to 0.
    $$
    
    First assume $d > 1$ and note that, in this case,
    $$H_{k}(f|_{\D X}) \cong \begin{cases} \mathbb{I}_{[-N,\infty)} &  \textrm{for} \,\, k = d-1, 0 \\ 0 &  \textrm{otherwise}\end{cases}$$
    
    Thus $\mathsf{Im}(\alpha_k) \cong \mathsf{Ker}(\alpha_k) = 0$ for $k\neq d-1,0$. For $\alpha_{d-1}$, the image of the essential $(d-1)$-cycle of the boundary dies once all cells in $(X,f)$ have been filtered at function value $\max f$. Hence $$\mathsf{Im}(\alpha_{d-1}) \cong \mathbb{I}_{[-N, \max f)}  \quad \textrm{and} \quad \mathsf{Ker}(\alpha_{d-1}) \cong \mathbb{I}_{[\max f, \infty)}$$
    As $-N = \min f$ we conclude $\alpha_0(\mathbb{I}_{[-N, \infty)}) = \mathbb{I}_{[-N, \infty)}$, so that
    $$\mathsf{Im}(\alpha_{0}) \cong \mathbb{I}_{[-N, \infty)}  \quad \textrm{and} \quad \mathsf{Ker}(\alpha_{0}) = 0$$
    
    Since $X$ is homeomorphic to a $d$-dimensional disc, $H_d(f) = 0$. Hence $\mathsf{Coker}(\alpha_{d}) = 0$ and, for $k = d$, the SES implies
    $$
    \tilde H_d(\tilde{f}) \cong H_d(f, f|_{\D X}) \cong \mathsf{Ker}(\alpha_{d-1}) \cong \mathbb{I}_{[\max f, \infty)}.
    $$
    For $0 \leq k < d$ the persistence module on the right of the SES is trivial. Thus
    $$
    \tilde H_{k}(\tilde f) \cong H_{k}(f, f|_{\D X}) \cong \mathsf{Coker}(\alpha_{k}) \cong \begin{cases} H_{d-1}(f) / \mathbb{I}_{[-N, \max f)} & \textrm{for} \,\, k = d-1 \\ H_{k}(f)& \textrm{for} \,\, 0< k < d-1 \\ H_{0}(f) / \mathbb{I}_{[-N, \infty)} & \textrm{for} \,\, k = 0 \end{cases}
    $$
    and the result follows for $d > 1$.
    
    For $d = 1$ the only non-trivial persistence module of the boundary is $H_{0}(f|_{\D X}) \cong \mathbb{I}_{[-N,\infty)} \oplus \mathbb{I}_{[-N,\infty)}$
    and we obtain 
    $$\mathsf{Im}(\alpha_0) \cong \mathbb{I}_{[-N,\infty)} \oplus \mathbb{I}_{[-N,\max f)} \quad \textrm{and} \quad \mathsf{Ker}(\alpha_0) \cong \mathbb{I}_{[\max f,\infty)}$$
    For $k = 1$ we proceed as above and, for $k = 0$, the SES yields 
    $$\tilde H_0(\tilde f) \cong H_0(f, f|_{\D X}) \cong \mathsf{Coker}(\alpha_{0}) \cong H_{0}(f) / \big(\mathbb{I}_{[-N, \infty)} \oplus \mathbb{I}_{[-N, \max f)}\big)$$
    As above we conclude $H_0(\tilde f) \cong H_0(f) / \mathbb{I}_{[-N, \max f)}$.
\end{proof}

\begin{corollary} \label{img_dual_cor} For a $d$-dimensional image $\mathcal{I} : I \to \R$
    $$\mathsf{Dgm}(\widetilde{V}(-\mathcal{I}^{\mathbf{P}})) = \mathsf{Dgm}(V(-\mathcal{I}^{\mathbf{P}})) \setminus \Set{ [-N, - \min \mathcal{I})_{d-1} } \cup \Set{ [- \min \mathcal{I}, \infty)_d }$$
and
    $$\mathsf{Dgm}(\widetilde{T}(-\mathcal{I}^{\mathbf{P}})) = \mathsf{Dgm}(T(-\mathcal{I}^{\mathbf{P}})) \setminus \Set{ [-N, - \min \mathcal{I})_{d-1} } \cup \Set{ [- \min \mathcal{I}, \infty)_d }$$
\end{corollary}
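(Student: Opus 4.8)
The plan is to recognise that this corollary is an essentially immediate specialisation of Lemma~\ref{quotient_lemma}: both $\widetilde{V}(-\mathcal{I}^{\mathbf{P}})$ and $\widetilde{T}(-\mathcal{I}^{\mathbf{P}})$ are, by construction, the quotient functions induced on $X/\partial X$ by filtered complexes $(X,f)$ of exactly the type treated there. Hence the whole task reduces to checking that the two filtered complexes $(V(I^{\mathbf{P}}), V(-\mathcal{I}^{\mathbf{P}}))$ and $(T(I^{\mathbf{P}}), T(-\mathcal{I}^{\mathbf{P}}))$ satisfy the three hypotheses of the lemma — that the underlying space is homeomorphic to $D^d$, that the filter function is constant with minimal value $-N$ on the boundary, and that one can identify the quantity $\max f$ appearing in the conclusion — and then substituting.

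The first hypothesis is immediate: each of $V(I^{\mathbf{P}})$ and $T(I^{\mathbf{P}})$ is a solid axis-aligned rectangular block of unit cubes, hence homeomorphic to $D^d$, so the quotient $X/\partial X$ and its induced function make sense. I would then verify the boundary condition. For the V-construction, every cell $\sigma \in \D V(I^{\mathbf{P}})$ lies in a bounding hyperplane of the block, so each of its vertices has a coordinate pinned to $0$ or some $n_i+1$; all such vertices are padding pixels of $I^{\mathbf{P}} \setminus I$ and carry value $-N$. Since $V(-\mathcal{I}^{\mathbf{P}})(\sigma)$ is the maximum of $-\mathcal{I}^{\mathbf{P}}$ over the vertices of $\sigma$, we obtain $V(-\mathcal{I}^{\mathbf{P}})(\sigma)=-N$. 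For the T-construction, a boundary cell $\sigma \in \D T(I^{\mathbf{P}})$ is a face only of those top cubes sharing its pinned coordinate, and these are exactly the outermost (padding) voxels; since $T(-\mathcal{I}^{\mathbf{P}})(\sigma)$ is the minimum of $-\mathcal{I}^{\mathbf{P}}$ over adjacent top cubes, again $T(-\mathcal{I}^{\mathbf{P}})(\sigma)=-N$. The choice $N > \max_{p} \mathcal{I}(p)$ guarantees $-N < -\mathcal{I}(p)$ for every $p \in I$, so $-N$ is genuinely the global minimum of each function, confirming the hypothesis $f=-N=\min f$ on $\D X$.

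Finally I would identify $\max f$. The global maximum of either function equals $\max_{p \in I^{\mathbf{P}}}\big(-\mathcal{I}^{\mathbf{P}}(p)\big) = -\min_{p \in I^{\mathbf{P}}} \mathcal{I}^{\mathbf{P}}(p)$, and because $N$ exceeds every value of $\mathcal{I}$ this minimum is attained inside $I$ and equals $\min \mathcal{I}$; hence $\max f = -\min \mathcal{I}$ in both cases. Substituting $\max f = -\min\mathcal{I}$ into the conclusion of Lemma~\ref{quotient_lemma}, applied separately to $V(-\mathcal{I}^{\mathbf{P}})$ and to $T(-\mathcal{I}^{\mathbf{P}})$, yields the two displayed identities. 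The only genuinely non-mechanical step is the boundary verification — in particular confirming that \emph{every} boundary cell (not merely the boundary vertices or top cubes) receives value exactly $-N$ under the max/min rule defining each construction; once this is in hand, the result follows as a direct specialisation of the lemma.
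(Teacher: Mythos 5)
Your proposal is correct and follows exactly the paper's route: the paper's proof is a one-line application of Lemma~\ref{quotient_lemma} to $f=V(-\mathcal{I}^{\mathbf{P}})$ and $f=T(-\mathcal{I}^{\mathbf{P}})$ with $\max f = -\min\mathcal{I}$. You simply spell out the hypothesis checks (boundary cells all taking the value $-N$ under the max/min rules, and the identification of $\max f$) that the paper leaves implicit.
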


\begin{proof}
    This follows from Lemma~\ref{quotient_lemma} applied to $f=V(-\mathcal{I}^\mathbf{P})$ and $f=T(-\mathcal{I}^\mathbf{P})$ respectively, using $\max V(-\mathcal{I}^\mathbf{P}) = - \min \mathcal{I}$ and $\max T(-\mathcal{I}^{\mathbf{P}})=- \min \mathcal{I}$.
\end{proof}

\section{Duality Results for Images}
\label{image_results}
In this section, we explicitly describe the relationship between the diagrams of both the T- and V-constructions.
Software to compute persistent homology of an image $\mathcal{I} : I \to \R$ typically builds one of the two constructions implicitly so the results in this section provide a solution to the problem of how to use software based on the V-construction to compute a persistence diagram with respect to the T-construction, and vice versa.


For the algorithms that we define in this section we assume the following sub-routines given a grayscale digital image $\mathcal{I}$.

\begin{enumerate}
    \item $\textsc{Pad}(\mathcal{I}, N)$: returns the image padded with an outer shell of $N$-valued voxels.
    \item $\textsc{Neg}(\mathcal{I})$: multiplies every gray value by $-1$. 
    \item $\max(\mathcal{I})$, $\min(\mathcal{I})$: returns the maximum and minimum voxel values of $\mathcal{I}$ respectively.
    \item \textsc{Vcon}$(\mathcal{I})$, \textsc{Tcon}$(\mathcal{I})$: returns the persistence diagrams $\mathsf{Dgm}(V(\mathcal{I}))$ and $\mathsf{Dgm}(T(\mathcal{I}))$ of the V- and T-construction of the image respectively.
\end{enumerate}

\subsection{From the V-construction to the T-construction}
\label{sec: from_V_to_T}

Suppose we have software that computes the persistent homology of a $d$-dimensional grayscale digital image $\mathcal{I}$ using the V-construction. 
The following theorem states that the persistence diagram of the T-construction for $\mathcal{I}$ can be calculated directly from the pairs in that of the V-construction of the negative padded image.

\begin{theorem}[T from V]\label{from_V_to_T}
    For a grayscale digital image $\mathcal{I} : I \to \R$ the diagrams of the V- and T-constructions satisfy 
    $$\mathsf{Dgm}_\mathbf{F}(T(\mathcal{I})) = \Set{ [-q,-p)_{d-k-1} \mid [p,q)_k \in \mathsf{Dgm}_\mathbf{F}(V(-\mathcal{I}^\mathbf{P})) } \setminus \Set{ [ \min \mathcal{I}, N)_0 }$$
    and
    $$\mathsf{Dgm}_\infty(T(\mathcal{I}))=\Set{ [\min \mathcal{I}, \infty)_0}.$$
\end{theorem}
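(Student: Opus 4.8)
The plan is to read off $\mathsf{Dgm}(T(\mathcal{I}))$ from $\mathsf{Dgm}(V(-\mathcal{I}^\mathbf{P}))$ by first passing through the dual pair of $d$-sphere complexes built in Section~\ref{sec:grayscale_digital_image_constructions}, applying the diagram-level consequence of Theorem~\ref{main_theorem}, and then stripping away the effect of each modification using the bookkeeping results of Section~\ref{sec:technical}. First I would invoke Corollary~\ref{cor:images_dual_filtrations}(1), which says $(T(I) \sqcup_{\partial T(I)} \kappa^{(d)}, \widehat{T}(\mathcal{I}))$ and $(V(I^\mathbf{P})/\partial V(I^\mathbf{P}), \widetilde{V}(-\mathcal{I}^\mathbf{P}))$ are dual filtered complexes. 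Writing $f = \widehat{T}(\mathcal{I})$ and $g = \widetilde{V}(-\mathcal{I}^\mathbf{P})$, Lemma~\ref{dual_filtration_lemma_1} gives $g(\sigma^*) = -f(\sigma)$ on dual cells, so the diagram corollary of Theorem~\ref{main_theorem} becomes the explicit correspondence $[a,b)_k \in \mathsf{Dgm}_\mathbf{F}(f) \Leftrightarrow [-b,-a)_{d-k-1} \in \mathsf{Dgm}_\mathbf{F}(g)$ on finite intervals and $[a,\infty)_k \in \mathsf{Dgm}_\infty(f) \Leftrightarrow [-a,\infty)_{d-k} \in \mathsf{Dgm}_\infty(g)$ on essential ones. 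Inverting the finite correspondence (the map $\phi : [p,q)_k \mapsto [-q,-p)_{d-k-1}$ is an involution on intervals) yields $\mathsf{Dgm}_\mathbf{F}(f) = \Set{ [-q,-p)_{d-k-1} \mid [p,q)_k \in \mathsf{Dgm}_\mathbf{F}(g) }$, which already has the shape of the claimed formula, albeit for the modified filtrations.

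The next step is to undo the modifications so $f$ and $g$ are replaced by the raw constructions $T(\mathcal{I})$ and $V(-\mathcal{I}^\mathbf{P})$. On the T-side, Proposition~\ref{dcell_padding_proposition}(2) shows the attached $d$-cell contributes only the essential class $[N,\infty)_d$, so $\mathsf{Dgm}_\mathbf{F}(\widehat{T}(\mathcal{I})) = \mathsf{Dgm}_\mathbf{F}(T(\mathcal{I}))$ while $\mathsf{Dgm}_\infty(\widehat{T}(\mathcal{I})) = \mathsf{Dgm}_\infty(T(\mathcal{I})) \cup \Set{[N,\infty)_d}$. On the V-side, Corollary~\ref{img_dual_cor} gives $\mathsf{Dgm}_\mathbf{F}(\widetilde{V}(-\mathcal{I}^\mathbf{P})) = \mathsf{Dgm}_\mathbf{F}(V(-\mathcal{I}^\mathbf{P})) \setminus \Set{[-N,-\min\mathcal{I})_{d-1}}$ together with an added essential class $[-\min\mathcal{I},\infty)_d$. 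Substituting the V-side identity into the finite correspondence and using injectivity of $\phi$, I would observe that the single deleted interval $[-N,-\min\mathcal{I})_{d-1}$ is sent by $\phi$ exactly to $[\min\mathcal{I},N)_0$; hence deleting it before dualizing equals subtracting $\Set{[\min\mathcal{I},N)_0}$ afterwards. This gives precisely the finite part $\mathsf{Dgm}_\mathbf{F}(T(\mathcal{I})) = \Set{ [-q,-p)_{d-k-1} \mid [p,q)_k \in \mathsf{Dgm}_\mathbf{F}(V(-\mathcal{I}^\mathbf{P})) } \setminus \Set{[\min\mathcal{I},N)_0}$.

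For the essential part the cleanest route is the direct topological observation that $T(I)$ is a solid rectangular block, hence contractible, so its only essential feature is the degree-$0$ class born at $\min T(\mathcal{I}) = \min\mathcal{I}$, giving $\mathsf{Dgm}_\infty(T(\mathcal{I})) = \Set{[\min\mathcal{I},\infty)_0}$ immediately. As a consistency check I would also run the essential correspondence: since $V(I^\mathbf{P})$ is contractible and the minimum of $-\mathcal{I}^\mathbf{P}$ is $-N$, we have $\mathsf{Dgm}_\infty(V(-\mathcal{I}^\mathbf{P})) = \Set{[-N,\infty)_0}$; adjoining the class $[-\min\mathcal{I},\infty)_d$ created by the quotient and dualizing via $[a,\infty)_k \mapsto [-a,\infty)_{d-k}$ produces $\Set{[N,\infty)_d, [\min\mathcal{I},\infty)_0}$ for $\mathsf{Dgm}_\infty(f)$, and cancelling the $[N,\infty)_d$ attributed by Proposition~\ref{dcell_padding_proposition}(2) to the attached cell again leaves $\Set{[\min\mathcal{I},\infty)_0}$.

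The main obstacle I expect is entirely the bookkeeping: tracking which finite and essential intervals are inserted or deleted by each of padding, cell attachment, and the boundary quotient, and verifying that the lone spurious $(d-1)$-interval removed on the V-side matches the $[\min\mathcal{I},N)_0$ subtracted on the T-side under dimension reversal $k \mapsto d-k-1$ and value negation. Additional care is needed to keep the two essential classes distinct—$[N,\infty)_d$ coming from the attached cell versus $[\min\mathcal{I},\infty)_0$ from the original connected component—so that the cancellation in the essential diagram is valid, and to account for possibly empty finite intervals where birth and death values coincide.
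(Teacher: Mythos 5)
Your proposal is correct and follows essentially the same route as the paper's own proof: the chain $\mathsf{Dgm}_\mathbf{F}(T(\mathcal{I})) = \mathsf{Dgm}_\mathbf{F}(\widehat{T}(\mathcal{I}))$ via Proposition~\ref{dcell_padding_proposition}, then the duality correspondence from Theorem~\ref{main_theorem} applied to the dual pair of Corollary~\ref{cor:images_dual_filtrations}(1), then Corollary~\ref{img_dual_cor} to strip the quotient's $(d-1)$-interval, whose image under $[p,q)_k \mapsto [-q,-p)_{d-k-1}$ is exactly $[\min\mathcal{I},N)_0$, with the essential part read off from contractibility of $T(I)$. Your extra consistency check on the essential classes is a nice addition but does not change the argument.
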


\begin{proof}
    That $\mathsf{Dgm}_{\infty} (T(\mathcal{I})) = \Set{ [\min \mathcal{I}, \infty)_0}$ follows from the fact that $T(I) \cong D^{(d)}$ and the first cell in the filtration occurs at time $\min T(\mathcal{I}) = \min \mathcal{I}$. For the finite case:
    \begin{align*}
    \mathsf{Dgm}_\mathbf{F}(T(\mathcal{I})) & = \mathsf{Dgm}_\mathbf{F}(\widehat{T}(\mathcal{I})) \\
    & =\Set{ [-q,-p)_{d-k-1} \mid [p,q)_k \in \mathsf{Dgm}_\mathbf{F}(\widetilde{V}(-\mathcal{I}^\mathbf{P}))}\\
    & = \Set{ [-q,-p)_{d-k-1} \mid [p,q)_k \in \mathsf{Dgm}_\mathbf{F}(V(-\mathcal{I}^\mathbf{P})) \setminus \Set{[-N, -\min  \mathcal{I})_{d-1}}} \\
    & = \Set{ [-q,-p)_{d-k-1} \mid [p,q)_k \in \mathsf{Dgm}_\mathbf{F}(V(-\mathcal{I}^\mathbf{P})) } \setminus \Set{[\min \mathcal{I}, N)_0}
    \end{align*}
    where the equalities follow from Proposition \ref{dcell_padding_proposition}, Theorem \ref{main_theorem} and Corollary \ref{img_dual_cor} respectively.
\end{proof}

\noindent The structure of the algorithm follows immediately from the theorem and is summarised below:\\

\begin{algorithm}
\setstretch{1.3}
\caption{Computing the T-construction persistence diagram with V-construction software.}\label{alg:VtoT}
\begin{algorithmic}[1]
\Require{ An image $\mathcal{I}$ and the sub-routine $\textsc{Vcon}$.}
\State $\mathsf{Dgm}(T(\mathcal{I})) \gets \Set{ [ \min (\mathcal{I}), \infty)_0 }$ 
\State $N \gets \max(\mathcal{I}) + C$ \Comment{choose $C$ to ensure $N \gg \max(\mathcal{I})$}
\State $-\mathcal{I}^\mathbf{P} \gets \textsc{Neg}(\textsc{Pad}(\mathcal{I},N))$
\State $\mathsf{Dgm}(V(-\mathcal{I}^\mathsf{P})) \gets \textsc{Vcon}( -\mathcal{I}^\mathbf{P})$ \Comment{Apply V-construction software.}
\For{ $[p,q)_k$ in $\mathsf{Dgm}(V(-\mathcal{I}^\mathbf{P}))$ with $p\neq -N$} 
\State $\mathsf{Dgm}(T(\mathcal{I})) \gets \mathsf{Dgm}(T(\mathcal{I})) \cup \Set{ [-q, -p)_{d-k-1}}$
\EndFor
\State \textbf{return} $\mathsf{Dgm}(T(\mathcal{I}))$ \Comment{Output T-construction persistence diagram.}
\end{algorithmic}
\end{algorithm}

\subsection{From the T-construction to the V-construction}
\label{sec: T_to_V}

In the other direction, suppose we have software that computes the persistent homology of a $d$-dimensional grayscale digital image $\mathcal{I} : I \to \R$ using the T-construction. The following theorem states that a persistence diagram for the V-construction of $\mathcal{I}$ can be calculated directly from the pairs computed for the negative padded image using the T-construction.

\begin{theorem}[V from T] \label{from_T_to_V}
    For a grayscale digital image $\mathcal{I} : I \to \R$ the diagrams of the V- and T-constructions satisfy $$\mathsf{Dgm}_\mathbf{F}(V(\mathcal{I})) = \Set{ [-q,-p)_{d-k-1} \mid [p,q)_k \in \mathsf{Dgm}_\mathbf{F}(T(-\mathcal{I}^\mathbf{P}))} \setminus \Set{ [ \min \mathcal{I}, N)_0 }$$ 
    and
    $$\mathsf{Dgm}_\infty(V(\mathcal{I}))= \Set{ [\min \mathcal{I}, \infty)_0}.$$ 
\end{theorem}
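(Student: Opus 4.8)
The plan is to mirror the proof of Theorem~\ref{from_V_to_T} (T from V), exchanging the roles of the V- and T-constructions and invoking the part-(2) versions of the supporting results throughout. The skeleton stays the same: first dispose of the essential class, then chain together the padding/attachment invariance, the dual-pairing bijection, and the quotient correction.

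For the essential part I would argue that $V(I)$ is a solid rectangular array of $d$-cubes, hence homeomorphic to $D^d$ and acyclic in positive degrees; since the first vertex enters the filtration at $\min V(\mathcal{I}) = \min\mathcal{I}$ and the single connected component never dies, the only essential interval is $[\min\mathcal{I}, \infty)_0$, giving $\mathsf{Dgm}_\infty(V(\mathcal{I})) = \Set{[\min\mathcal{I}, \infty)_0}$ exactly as claimed.

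For the finite part I would assemble the chain of equalities
\begin{align*}
\mathsf{Dgm}_\mathbf{F}(V(\mathcal{I})) &= \mathsf{Dgm}_\mathbf{F}(V(\mathcal{I}^\mathbf{P})) = \mathsf{Dgm}_\mathbf{F}(\widehat{V}(\mathcal{I}^\mathbf{P})) \\
&= \Set{[-q,-p)_{d-k-1} \mid [p,q)_k \in \mathsf{Dgm}_\mathbf{F}(\widetilde{T}(-\mathcal{I}^\mathbf{P}))} \\
&= \Set{[-q,-p)_{d-k-1} \mid [p,q)_k \in \mathsf{Dgm}_\mathbf{F}(T(-\mathcal{I}^\mathbf{P})) \setminus \Set{[-N,-\min\mathcal{I})_{d-1}}} \\
&= \Set{[-q,-p)_{d-k-1} \mid [p,q)_k \in \mathsf{Dgm}_\mathbf{F}(T(-\mathcal{I}^\mathbf{P}))} \setminus \Set{[\min\mathcal{I}, N)_0},
\end{align*}
where the successive equalities come from Proposition~\ref{dcell_padding_proposition} (padding leaves the diagram unchanged, and attaching a $d$-cell adds only an essential class so the finite part is untouched), from Theorem~\ref{main_theorem} applied to the dual filtered pair of Corollary~\ref{cor:images_dual_filtrations}(2), and from Corollary~\ref{img_dual_cor}; the final step merely tracks the removed interval $[-N,-\min\mathcal{I})_{d-1}$ under $[p,q)_k \mapsto [-q,-p)_{d-k-1}$, which sends it to $[\min\mathcal{I}, N)_0$.

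The main subtlety---rather than a deep obstacle---is the asymmetry flagged in the second duality lemma: here the V-construction must be padded \emph{before} the cell $\kappa^{(d)}$ is attached (because $T(I)$ naturally carries more cells than $V(I)$), so the chain requires the extra opening equality $\mathsf{Dgm}_\mathbf{F}(V(\mathcal{I})) = \mathsf{Dgm}_\mathbf{F}(V(\mathcal{I}^\mathbf{P}))$ that had no counterpart in the T-from-V argument. I would therefore take care that Corollary~\ref{cor:images_dual_filtrations}(2) pairs $\widehat{V}(\mathcal{I}^\mathbf{P})$ with the \emph{quotiented} $\widetilde{T}(-\mathcal{I}^\mathbf{P})$ (not with $T(-\mathcal{I}^\mathbf{P})$ directly), and that the degree bookkeeping $k \mapsto d-k-1$ and the sign flips $p \mapsto -q$, $q \mapsto -p$ are applied consistently, since these index shifts are the only places where an error could realistically creep in.
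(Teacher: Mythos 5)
Your proposal is correct and follows essentially the same route as the paper's own proof: the essential class is handled by noting $V(I)\cong D^d$ with the first cell entering at $\min\mathcal{I}$, and the finite part is obtained by exactly the same chain of equalities through $V(\mathcal{I}^\mathbf{P})$, $\widehat{V}(\mathcal{I}^\mathbf{P})$, and $\widetilde{T}(-\mathcal{I}^\mathbf{P})$, citing Proposition~\ref{dcell_padding_proposition}, Theorem~\ref{main_theorem} via Corollary~\ref{cor:images_dual_filtrations}(2), and Corollary~\ref{img_dual_cor}. The subtlety you flag about padding the V-construction before attaching $\kappa^{(d)}$ is precisely the extra opening equality present in the paper's argument as well.
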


\begin{proof}
    That $\mathsf{Dgm}(V(\mathcal{I})) = \Set{ [\min \mathcal{I}, \infty)_0}$ follows from the fact that $V(I) \cong D^{(d)}$ and the first cell in the filtration occurs at time $\min \mathcal{I}$. For the finite case, we have that 
    \begin{align*}
    \mathsf{Dgm}_\mathbf{F}(V(\mathcal{I})) & = 
    \mathsf{Dgm}_\mathbf{F}(V(\mathcal{I}^\mathbf{P})) \\
    & = \mathsf{Dgm}_\mathbf{F}(\widehat{V}(\mathcal{I}^\mathbf{P})) \\
    & =\Set{ [-q,-p)_{d-k-1} \mid [p,q)_k \in \mathsf{Dgm}_\mathbf{F}(\widetilde{T}(-\mathcal{I}^\mathbf{P}))}\\
    & = \Set{ [-q,-p)_{d-k-1} \mid [p,q)_k \in \mathsf{Dgm}_\mathbf{F}(T(-\mathcal{I}^\mathbf{P})) \setminus \Set{[-N, -\min  \mathcal{I})_{d-1}}} \\
    & = \Set{ [-q,-p)_{d-k-1} \mid [p,q)_k \in \mathsf{Dgm}_\mathbf{F}(T(-\mathcal{I}^\mathbf{P})) } \setminus \Set{[\min \mathcal{I}, N)_0}
    \end{align*}
     where the equalities follow from Proposition \ref{dcell_padding_proposition}, Theorem \ref{main_theorem} and Corollary \ref{img_dual_cor} respectively.
\end{proof}

\noindent The structure of the algorithm follows immediately from the theorem and is summarised below:\\

\begin{algorithm}
\setstretch{1.3}
\caption{Computing the V-construction persistence diagram with T-construction software.}\label{alg:TtoV}
\begin{algorithmic}[1]
\Require{ An image $\mathcal{I}$ and the sub-routine $\textsc{Tcon}$.}
\State $\mathsf{Dgm}(V(\mathcal{I})) \gets \Set{ [ \min (\mathcal{I}), \infty)_0 }$
\State $N \gets \max(\mathcal{I}) + C$ \Comment{choose $C$ to ensure $N \gg \max(\mathcal{I})$}
\State $-\mathcal{I}^\mathbf{P} \gets \textsc{Neg}(\textsc{Pad}(\mathcal{I},N))$
\State $\mathsf{Dgm}(T(-\mathcal{I}^\mathsf{P})) \gets \textsc{Tcon}( -\mathcal{I}^\mathbf{P})$ \Comment{Apply T-construction software.}
\For{ $[p,q)_k$ in $\mathsf{Dgm}(V(-\mathcal{I}^\mathbf{P}))$ with $p\neq -N$} 
\State $\mathsf{Dgm}(V(\mathcal{I})) \gets \mathsf{Dgm}(V(\mathcal{I})) \cup \Set{ [-q, -p)_{d-k-1}}$
\EndFor
\State \textbf{return} $\mathsf{Dgm}(V(\mathcal{I}))$ \Comment{Output V-construction persistence diagram.}
\end{algorithmic}
\end{algorithm}

\begin{example} Suppose we are working with the two dimensional digital grayscale image given in Figure \ref{fig:V_and_T_constructions} and have only the software to compute the T-construction. We depict the filtration of $T(-\mathcal{I}^\mathbf{P})$ in Figure $\ref{dual_barcode}$, and the corresponding intervals in the persistence module. Similarly, we show the filtered V-construction $V(\mathcal{I})$ in Figure \ref{fig:V-construction_filtration}. The reader may confirm that the correspondence between the intervals is accurately described by Theorem \ref{from_T_to_V}.

\begin{figure}[H]
\includegraphics[width=\linewidth]{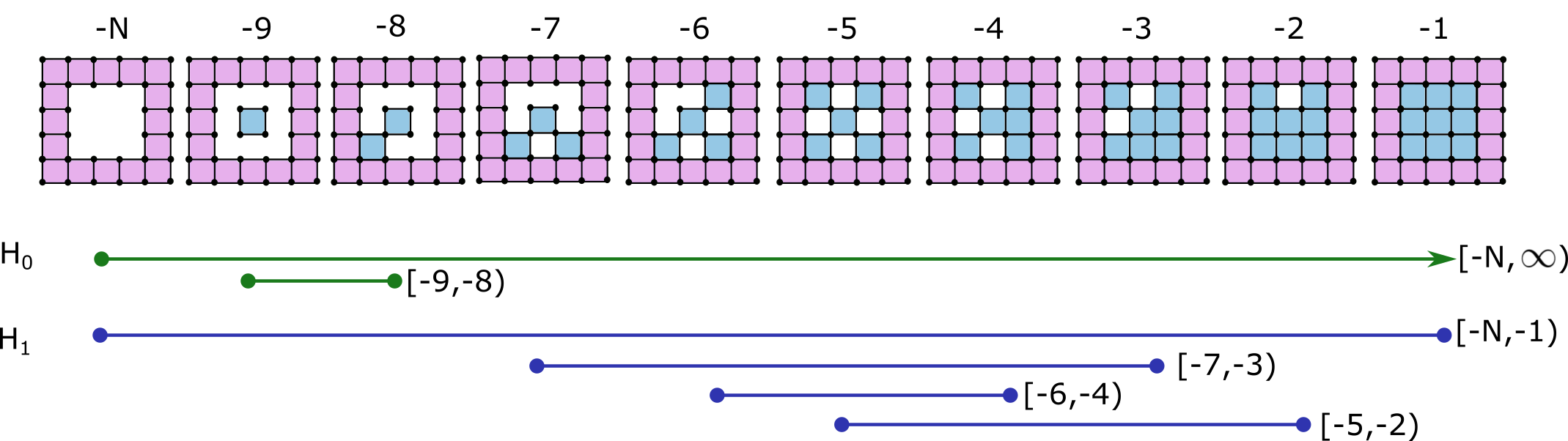}
    \caption{The filtration $T(-\mathcal{I}^\mathbf{P})$ and intervals of the persistence diagram $\mathsf{Dgm}(T(-\mathcal{I}^\mathbf{P}))$ for the image $\mathcal{I} : I \to \R$ of Figure \ref{fig:V_and_T_constructions}. 
    }
    \label{dual_barcode}
\end{figure}

\begin{figure}[H]
    \centering
    \includegraphics[width=\linewidth]{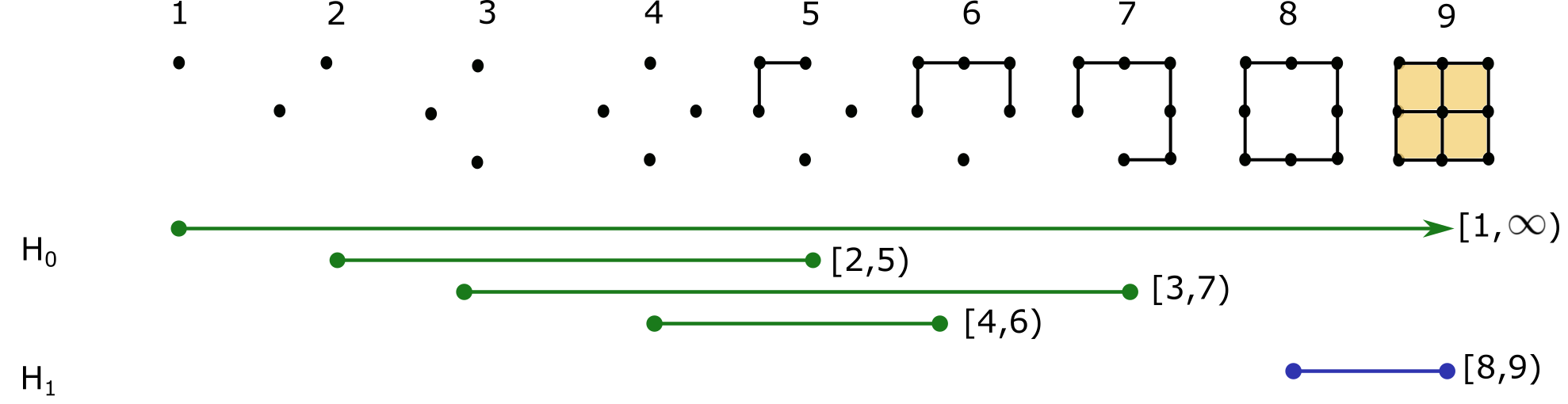}
    \caption{The filtration $V(\mathcal{I})$ and intervals of the persistence diagram $\mathsf{Dgm}(V(\mathcal{I}))$ for the image $\mathcal{I} : I \to \R$ of Figure \ref{fig:V_and_T_constructions}.}
    \label{fig:V-construction_filtration}
\end{figure}
\end{example}

\section{Discussion}

Our results clarify the relationship between the two cubical complex constructions commonly used in digital image analysis software and provide a simple method to use software that implements one construction to compute a persistence diagram for the other. 
This permits a user's choice of adjacency type for their images to depend on that appropriate to the application rather than on the type of construction used in available efficient persistence software. 
In addition to facilitating this application, the results of Sections~\ref{sec:main_duality_result} and~\ref{sec:technical} may be of independent interest for the following reasons. 

Theorem \ref{main_theorem} is a new interpretation of a duality relationship that manifests in many contexts such as the correspondence between persistent homology and persistent relative cohomology~\cite{Vin}, symmetries in extended persistence diagrams~\cite{extending_pers}, and a discrete Helmoltz-Hodge decomposition~\cite{oelsboeck}.
In \cite{socg}, we show that the filtered discrete Morse chain complexes also exhibit this duality.  Further investigations in this area may reveal other interpretations of this relationship.

The results of Section~\ref{sec:technical} are formulated specifically for the case of an image with its domain homeomorphic to a closed ball, but could be extended to spaces with more interesting topology.  We anticipate that the long exact sequence of a pair can be used to derive a relationship between filtered cell complexes that satisfy conditions for duality if their boundaries can be capped or quotiented as in Section~\ref{sec:grayscale_digital_image_constructions} to obtain a manifold.   
 
Further applications of the relationship between the T- and V-constructions can be found.  In particular, a discrete gradient vector field built on $V(I)$ is easily transformed into a dual one on $T(I)$~\cite{socg}.  If the gradient vector field is built to be consistent with the grayscale image $\mathcal{I}$ according to the algorithm of~\cite{Robins_DMT_images}, then the dual gradient vector field on $T(I)$ will be consistent with $-\mathcal{I}$.  
This means the skeletonisation and partitioning algorithms of~\cite{delgado2015skeletonization} can now be adapted to work on images where the T-construction is preferred. 

There are also interesting questions about algorithm performance to explore.  
The results of Section \ref{image_results} suggest that persistence diagram computation from grayscale images should have the same average run time independent of the choice of T- and V-construction.  If the T-construction executes faster on a particular image, then the V-construction should execute faster on the negative of the image.  To answer this question fully requires a careful analysis of the effects of taking the anti-transpose of the boundary matrix on the run time of the matrix reduction algorithm and the extra cells added when padding the image.

\section{Acknowledgments}
This project started during the Women in Computational Topology workshop held in Canberra in July of 2019. All authors are very grateful for its organisation and the financial support for the workshop from the Mathematical Sciences Institute at ANU, the NSF, AMSI and AWM. AG is supported by the Swiss National Science Foundation, grant No.\ $CRSII5\_177237$. TH is supported by the ERC Horizon 2020 project ‘Alpha Shape Theory Extended’, No.\ 788183. KM is supported by the European Union’s Horizon 2020 research and innovation programme under the Marie Skłodowska-Curie grant agreement No.\ 859860. VR was supported by ARC Future Fellowship FT140100604 during the early stages of this project.

\bibliography{biblio.bib}

\begin{thebibliography}{10}

\bibitem{Bubenik2021}
Peter Bubenik and Nikola Milićević.
\newblock Homological algebra for persistence modules.
\newblock {\em Foundations of Computational Mathematics}, Jan 2021.

\bibitem{chazal2016structure}
Fr{\'e}d{\'e}ric Chazal, Vin De~Silva, Marc Glisse, and Steve Oudot.
\newblock {\em The structure and stability of persistence modules}.
\newblock Springer, 2016.

\bibitem{extending_pers}
David Cohen-Steiner, Herbert Edelsbrunner, and John Harer.
\newblock Extending persistence using {P}oincaré and {L}efschetz duality.
\newblock {\em Foundations of Computational Mathematics}, 9:79--103, Feb 2009.

\bibitem{vineyards}
David Cohen-Steiner, Herbert Edelsbrunner, and Dmitriy Morozov.
\newblock Vines and vineyards by updating persistence in linear time.
\newblock In {\em Proceedings of the Twenty-Second Annual Symposium on
  Computational Geometry}, SCG '06, page 119–126, New York, NY, USA, 2006.
  Association for Computing Machinery.

\bibitem{Vin}
Vin De~Silva, Dmitriy Morozov, and Mikael Vejdemo-Johansson.
\newblock Dualities in persistent (co)homology.
\newblock {\em Inverse Problems}, 27, July 2011.

\bibitem{delfinado1995incremental}
Cecil Jose~A. Delfinado and Herbert Edelsbrunner.
\newblock An incremental algorithm for {B}etti numbers of simplicial complexes
  on the 3-sphere.
\newblock {\em Computer Aided Geometric Design}, 12(7):771--784, 1995.

\bibitem{delgado2015skeletonization}
Olaf Delgado-Friedrichs, Vanessa Robins, and Adrian Sheppard.
\newblock Skeletonization and partitioning of digital images using discrete
  {M}orse theory.
\newblock {\em IEEE transactions on pattern analysis and machine intelligence},
  37(3):654--666, 2015.

\bibitem{medical_applications}
Olga Dunaeva, Herbert Edelsbrunner, Anton Lukyanov, Michael Machin, Daria
  Malkova, Roman Kuvaev, and Sergey Kashin.
\newblock The classification of endoscopy images with persistent homology.
\newblock {\em Pattern Recognition Letters}, 83:13 -- 22, 2016.
\newblock Geometric, topological and harmonic trends to image processing.

\bibitem{Edel08}
Herbert Edelsbrunner and John Harer.
\newblock Persistent homology, a survey.
\newblock {\em Discrete {\&} Computational Geometry - DCG}, 453, Jan 2008.

\bibitem{Edelsbrunner2002}
Herbert Edelsbrunner, David Letscher, and Afra Zomorodian.
\newblock Topological persistence and simplification.
\newblock {\em Discrete {\&} Computational Geometry}, 28(4):511--533, Nov 2002.

\bibitem{oelsboeck}
Herbert Edelsbrunner and Katharina {\"O}lsb{\"o}ck.
\newblock Tri-partitions and bases of an ordered complex.
\newblock {\em Discrete \& Computational Geometry}, pages 1--17, 2020.

\bibitem{adaptative}
Herbert Edelsbrunner and Olga Symonova.
\newblock The adaptive topology of a digital image.
\newblock In {\em Proceedings of the 2012 9th International Symposium on
  Voronoi Diagrams in Science and Engineering, ISVD 2012}, pages 41--48. IEEE,
  June 2012.

\bibitem{socg}
Adélie Garin, Teresa Heiss, Kelly Maggs, Bea Bleile, and Vanessa Robins.
\newblock Duality in persistent homology of images.
\newblock {\em Extended abstract SoCG YRF}, 2020.

\bibitem{hatcher2002}
Allen Hatcher.
\newblock {\em Algebraic topology}.
\newblock Cambridge University Press, Cambridge, 2002.

\bibitem{cubical_ripser}
Shizuo Kaji, Takeki Sudo, and Kazushi Ahara.
\newblock Cubical ripser: Software for computing persistent homology of image
  and volume data.
\newblock {\em ArXiv 2005.12692}, 2020.

\bibitem{kovalevsky_finite_1989}
Vladimir~A Kovalevsky.
\newblock Finite topology as applied to image analysis.
\newblock {\em Computer Vision, Graphics, and Image Processing},
  46(2):141--161, May 1989.

\bibitem{lundell2012topology}
Albert~T Lundell and Stephen Weingram.
\newblock {\em The topology of {CW} complexes}.
\newblock Springer Science \& Business Media, 2012.

\bibitem{miller2020homological}
Ezra Miller.
\newblock Homological algebra of modules over posets, 2020.

\bibitem{munkres}
James~R. Munkres.
\newblock {\em Elements of algebraic topology}.
\newblock Addison-Wesley Publishing Company, Menlo Park, CA, 1984.

\bibitem{Robins_DMT_images}
Vanessa Robins, Peter Wood, and Adrian P~Sheppard.
\newblock Theory and algorithms for constructing discrete {M}orse complexes
  from grayscale digital images.
\newblock {\em IEEE transactions on pattern analysis and machine intelligence},
  33, May 2011.

\bibitem{hurricanes}
Sarah Tymochko, Elizabeth Munch, Jason Dunion, Kristen Corbosiero, and Ryan
  Torn.
\newblock Using persistent homology to quantify a diurnal cycle in hurricanes.
\newblock {\em Pattern Recognition Letters}, 133:137 -- 143, 2020.

\bibitem{varl2018homological}
Hanife Varlı, Yağmur Yılmaz, and Mehmetcik Pamuk.
\newblock Homological properties of persistent homology, 2018.

\bibitem{cubicle}
Hubert Wagner.
\newblock Cubicle: Streaming computations for persistent homology of large
  greyscale images, 2018.
\newblock https://bitbucket.org/hubwag/cubicle/src/master/.

\bibitem{zomorodian2005computing}
Afra Zomorodian and Gunnar Carlsson.
\newblock Computing persistent homology.
\newblock {\em Discrete \& Computational Geometry}, 33(2):249--274, 2005.

\end{thebibliography}

\appendix

\end{document}